\newtheorem{thm}{Theorem}[section]
\newtheorem{lem}[thm]{Lemma}
\newtheorem{conj}[thm]{Conjecture}
\theoremstyle{definition}
\theoremstyle{remark}
\newtheorem{rem}[thm]{Remark}
\numberwithin{equation}{section}
\newcommand{\F}{\mathbb{F}}
\newcommand{\Z}{\mathbb{Z}}
\begin{document}

\title{Expansions in completions of global function fields}
\author{Chunlin Wang}
\address{School of Mathematical Sciences, Sichuan Normal University, Chengdu 610066, China}
\email{c-l.wang@outlook.com}

\thanks{}%
\subjclass{11B85, 12E20}%
\keywords{global function fields, automatic sequences, Christol's theorem, $\beta$-expansions}%

\begin{abstract}
It is well known that any power series over a finite field represents a rational function if and only if its sequence of coefficients is ultimately periodic. The famous Christol's Theorem states that a power series over a finite field is algebraic if and only if its sequence of coefficients is $p$-automatic. In this paper, we extend  these two results to expansions of elements in the completion of a global function field under a nontrivial valuation. As application of our generalization of Christol's theorem, we answer some questions about $\beta$-expansions of formal Laurent series over finite fields.  
\end{abstract}
\maketitle
\section{Introduction}

Let $p$ be a prime number and $q$ be a power of $p$. Denote by $\F_q$ the finite field of $q$ elements. The polynomial ring, power series ring and rational function field over $\F_q$ are denoted by $\F_q[z]$, $\F_q[[z]]$ and $\F_q(z)$ respectively. Let $\F_q((z))$ be the quotient field of $\F_q[[z]]$. For $$f(z):=\sum_{n=m}^{+\infty}a_nz^n\in\F_q((z)),$$ 
we say that $f$ is {\it rational} (over $\F_q(z)$) if $f(z)=P(z)/Q(z)$ for some polynomials $P,Q$ over $\F_q$, and say $f$ is {\it algebraic} (over $\F_q(z)$) if there exists a polynomial $R(z,w)\in\F_q[z,w]$ such that $R(z,f(z))=0$. 

Let $S:=\{a_n\}_{n\ge m}$ be the sequence of coefficients of $f$. The sequence $S$ is called {\it ultimately periodic} if there exist integers $N\ge m$ and $L>0$ such that $a_n=a_{n+L}$ for all $n\ge N$. For integers $i,j,k$ with $k>0, i\ge 0$, and $0\le j<k^i$, denote by $S_{i,j}$ the subsequence $\{a_{nk^i+j-m}\}_{n\ge 0}$ of $S$. We call $S$ {\it $k$-automatic} if its {\it $k$-kernel}, i.e. the set of subsequences  
$$\{S_{i,j}: i\ge 0, 0\le j<k^i\},$$ 
is finite. It is worth mentioning that this definition is just one of the equivalent definitions of automatic sequences. For more on automatic sequences, we refer the reader to Allouche and Shallit \cite{AS}, and references there. \\

For power series over $\F_q$, we have the following two well known facts about $f$ and its sequence of coefficients. 

{\it Fact A}: $f$ is rational if and only if $S$ is ultimately periodic. 

{\it Fact B}: $f$ is algebraic if and only if $S$ is $p$-automatic. \\
Fact A is elementary but useful. Fact B, known as Christol's Theorem (see \cite{Ch},\cite{CKMFR}), builds a deep connection between computer science and arithmetic in global function fields. 

Generalizations and analogues of Christol's theorem are considered in literature. Salon \cite{Sa87,Sa89} generalized Christol's Theorem to multivariable power series over a finite field. Independently, Sharif and Woodcock \cite{SW}, and Harase \cite{Ha88} generalized Christol's Theorem to the case of multivariable power series over an infinite field of positive characteristic. An extension of Christol's theorem to  generalized algebraic power series is given by Kedlaya \cite{Ke}. Quantitative version of Christol's theorem is studied in  \cite{Ha88, Ha89, FKdM, AB, Br} and \cite{AY}. The automaticity of $\beta$-expansion of formal Laurent series is considered in \cite{HM} and \cite{SS}.   
\\

Seen form the viewpoint of valuation theory, the power series ring $\F_q[[z]]$ is just the completion of $\F_q[z]$ under the valuation associated to the prime $z\F_q[z]$, and the power series expansion of a rational function $P/Q$, where $P,Q\in \F_q[z]$, is just a special case of expansions of elements in the completion of a global function field under a nontrivial valuation. This inspires us to consider generalizations of Fact A and B for expansions of elements in the completion of a global function field. 

Let $K$ be a {\it global function field} of characteristic $p$, i.e., $K$ is a field containing $\F_p$ and an element $x$ which is transcendental over $\F_p$ with $[K:\F_p(x)]<\infty$. The algebraic closure of $\F_p$ in $K$ is called the constant field of $K$. A valuation on $K$ is a function $v:K^*\to \Z$ satisfying   
    
    (1). $v(a+b)\ge \inf\{v(a),v(b)\}$, 
    
    (2). $v(ab)=v(a)+v(b)$. \\ 
We say $v$ is nontrivial if $v(x)\neq 0$ for at least one $x\in K^*$, and say $v$ is normalized if $v(K^*)=\mathbb{Z}$. For convenience we define $v(0):=+\infty$. Let $A:=\{x\in K: v(x)\ge 0\}$ be the valuation ring of $v$ and $M:=\{x\in K: v(x)>0\}$ be the maximal ideal of $A$. The valuation $v$ induces a topological structure on $K$, under which $K$ has a completion, denoted by $K_v$. Respectively denote by $A_v$ and $M_v$ the completions of $A$ and $M$. Hereafter we always suppose $v$ is normalized unless otherwise stated. For any $\pi\in K_v$ with $v(\pi)>0$, denote by $r:=|A_v/\pi A_v|$. Let 
$$\Gamma:=\{\gamma_1,\dots,\gamma_r\}\subset A_v$$ 
be a {\it complete set of representatives} of $A_v/\pi A_v$, that is, $\{\bar{\gamma_i}: 1\le i\le n\}=A_v/\pi A_v$, where $\bar{\gamma_i}$ denotes the reduction of $\gamma_i$ modulo $\pi A_v$. Then every element $x\in K_v$ can be uniquely expressed as an expansion 
\begin{equation} \label{expansion}
x=\sum_{n\ge m}a_n\pi^n, 
\end{equation} 
where $m$ is an integer depending only on $x$, and $a_n\in \Gamma$ for all $n\ge m$. We call (\ref{expansion}) the expansion of $x$ with respect to $(\Gamma,\pi)$, or simply expansion of $x$ if $(\Gamma,\pi)$ is clear from the context. The sequence of coefficients of (\ref{expansion}) is $\{a_n\}_{n\ge m}$. An element $x\in K_v$ is called rational if $x\in K$, and is called algebraic if $x$ is algebraic over $K$. 
We say $(\Gamma,\pi)$ has {\it property A} (respectively, {\it property B}) if Fact A (respectively, Fact B) holds for expansions with respect to $(\Gamma,\pi)$. 
Our main purpose is to find equivalent description for $(\Gamma,\pi)$ that has property A or B. This goal is achieved for $(\Gamma,\pi)$ having property A. 

\begin{thm}\label{Property A}
$(\Gamma,\pi)$ has property A if and only if the following two are true:

{\rm (i)} $\pi$ and elements of $\Gamma$ are rational.  

{\rm (ii)} $[K:\F_q(\pi)]=[K_v:\F_q((\pi))]$. 

\end{thm}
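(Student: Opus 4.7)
I will prove both directions. The common observation is that an expansion $x=\sum_{n\ge m}a_n\pi^n$ is ultimately periodic if and only if the iterated tails $\sigma^N(x):=(x-\sum_{n<N}a_n\pi^n)/\pi^N\in A_v$ take only finitely many values, and when $\pi$ and $\Gamma$ lie in $K$ the shift $\sigma$ preserves $K\cap A_v$. Write $d:=[K_v:\F_q((\pi))]$.

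Assume (i) and (ii). The easy half of property A is immediate: ultimately periodic expansion gives $x=A(\pi)+\pi^NB(\pi)/(1-\pi^L)$ with $A,B\in\F_q[\pi,\Gamma]\subseteq K$ by (i), so $x\in K$. For the converse, let $x\in K$ with $v(x)\ge 0$; I claim the tails $\sigma^n(x)\in K$ have uniformly bounded height. Condition (ii) ensures $v$ is the unique place of $K$ above the $\pi$-adic place of $\F_q(\pi)$, so at every other place $w$ of $K$ one has $w(\pi)\le 0$. A routine valuation computation on $\sigma^n(x)=(x-P_n(\pi))/\pi^n$, $P_n(\pi):=\sum_{j<n}a_j\pi^j$, then bounds $w(\sigma^n(x))$ below uniformly in $n$ by a constant depending only on $w$, $x$, and the finite set $\Gamma$; since only finitely many places $w$ have a $\gamma\in\Gamma$ or $\pi$ with negative $w$-valuation, the heights of the $\sigma^n(x)$ are uniformly bounded. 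Northcott's theorem (finitely many elements of bounded height in the global function field $K$) then forces $\{\sigma^n(x)\}_n$ to be finite, so the expansion is ultimately periodic.

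Assume property A. Since the expansion of $0\in K$ is ultimately periodic and the expansion of any $\gamma\in\Gamma$ is $\gamma$ prepended to that of $0$, each $\gamma$ has ultimately periodic expansion, so $\gamma\in K$ by property A. Taking $\gamma_1\in\Gamma$ to be the representative of $\bar 1$, a direct computation yields $\sigma(\gamma_1/(1-\pi))=\gamma_1/(1-\pi)$, so the expansion of $\gamma_1/(1-\pi)$ is purely periodic; property A places $\gamma_1/(1-\pi)\in K$, whence $\pi\in K$. This establishes (i). For (ii), I argue by contradiction: a second place $v'$ of $K$ above the $\pi$-adic place of $\F_q(\pi)$ would, by weak approximation, produce $z\in K$ with $v(z)\ge 0$ and $v'(z)$ arbitrarily negative; the ultimately periodic $(\Gamma,\pi)$-expansion of $z$ from property A would then express $z$ as $A(\pi)+\pi^NB(\pi)/(1-\pi^L)$ with $A,B\in\F_q[\pi,\Gamma]$, and since $v'(\pi)>0$ and $v'(1-\pi^L)=0$ this element has $v'$-valuation at least $\min_{\gamma\in\Gamma}v'(\gamma)>-\infty$, a bound independent of $z$, contradicting the arbitrary smallness of $v'(z)$. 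Uniqueness of the place above, combined with the standard formula $[K:\F_q(\pi)]=\sum_i[K_{v_i}:\F_q((\pi))]$, yields (ii).

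The main obstacle I foresee is the height bound in the backward direction: it requires tracking valuations of the truncated expansions $P_n(\pi)$ at every auxiliary place of $K$, and the truncation acquires poles at each place where $\pi$ has a pole. Condition (ii) is exactly what prevents these poles from accumulating faster than the division by $\pi^n$ can absorb them, and is therefore the crucial hypothesis making Northcott's theorem applicable.
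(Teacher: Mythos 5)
Your proof is correct, but it takes a genuinely different route from the paper's. The paper never considers the orbit of tails: for sufficiency it fixes $\alpha_1,\dots,\alpha_{ef}\in A$ whose reductions form an $\F_q$-basis of $A/\pi A$, proves property A first for $\Gamma'=\F_q\alpha_1+\cdots+\F_q\alpha_{ef}$ by transporting the classical Fact A coordinatewise from $\F_q((\pi))$ (Lemma~\ref{sufficiency step i}), then for $\Gamma\subset\F_q[\pi]\alpha_1+\cdots+\F_q[\pi]\alpha_{ef}$ by a pigeonhole argument on blocks of $d+1$ consecutive digits that controls the carries (Lemma~\ref{sufficiency step ii}), and finally handles general $\Gamma$ by replacing it with $(1-\pi^L)\Gamma$; for the necessity of (ii) it shows directly (Lemma~\ref{Necessity of A}) that if $[K:\F_q(\pi)]>[K_v:\F_q((\pi))]$ then a counting argument on representatives written in $\F_q(\pi)$-coordinates produces a rational $x$ whose expansion cannot be ultimately periodic. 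You replace all of this with the dynamics of the digit shift: uniform valuation bounds on the tails plus Northcott (equivalently, finiteness of Riemann--Roch spaces over a finite constant field) for sufficiency, and weak approximation plus the fundamental identity $[K:\F_q(\pi)]=\sum_{w\mid v_\pi}[K_w:\F_q((\pi))]$ for the necessity of (ii). What your route buys: it is shorter, and it isolates the true meaning of condition (ii), namely that $v$ is the unique place of $K$ above the $\pi$-adic place of $\F_q(\pi)$, which is exactly what makes $w(\pi)\le 0$ at all other places and hence keeps the tails of bounded height. What it costs: heavier standard inputs, and two points you should justify explicitly --- that ``bounded height implies finitely many elements'' uses finiteness of the constant field, and that the fundamental identity holds for the possibly inseparable extension $K/\F_q(\pi)$ (it does: $[K:\F_q(\pi)]$ equals the degree of the zero divisor of $\pi$, which is $\sum_{w\mid v_\pi}e_wf_w$, with no separability hypothesis needed). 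The paper's route, by contrast, is entirely elementary, using nothing beyond Fact A over $\F_q((\pi))$ and linear algebra. A small bonus of your treatment of (i): prepending $\gamma$ to the expansion of $0$, and using the fixed point $\gamma_1/(1-\pi)$ of the shift, works even when $0\notin\Gamma$, whereas the paper's necessity argument tacitly assumes that the representative of the zero class is $0$ itself when it asserts that the expansion of $\gamma$ has all higher digits equal to $0$.
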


For $(\Gamma,\pi)$ with property B, we have a conjectural result. 

\begin{conj}\label{Property B}
$(\Gamma,\pi)$ has property $B$ if and only if $\pi$ and elements of $\Gamma$ are algebraic over $K$.  
\end{conj}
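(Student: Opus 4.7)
The plan is to prove the two directions of Conjecture \ref{Property B} by separate arguments, with the ``only if'' direction being short and explicit, and the ``if'' direction reducing to the classical Christol theorem via an extension of scalars, a choice of basis, and a transfer principle.

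\textbf{The ``only if'' direction.} Suppose $(\Gamma,\pi)$ has property B. For each $\gamma\in\Gamma$, the $(\Gamma,\pi)$-expansion of $\gamma$ itself is $\gamma=\gamma\cdot\pi^{0}$, so its coefficient sequence $(\gamma,0,0,\ldots)$ is eventually zero and hence trivially $p$-automatic; by property B, $\gamma$ is algebraic over $K$. To obtain algebraicity of $\pi$, pick any nonzero $\gamma_{0}\in\Gamma$ (which exists since $|\Gamma|=|A_v/\pi A_v|\ge 2$), and consider $y=\sum_{n\ge 0}\gamma_{0}\pi^{n}=\gamma_{0}/(1-\pi)\in K_v$, noting that $1-\pi$ is a unit in $A_v$ because $v(1-\pi)=0$. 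The $(\Gamma,\pi)$-expansion of $y$ is the constant sequence $(\gamma_{0},\gamma_{0},\ldots)$, which is $p$-automatic, so $y$ is algebraic over $K$ by property B; combined with the algebraicity of $\gamma_{0}$, the identity $\pi=1-\gamma_{0}/y$ shows that $\pi$ is algebraic.

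\textbf{The ``if'' direction, reduction.} Assume $\pi$ and each $\gamma\in\Gamma$ are algebraic over $K$. Set $L=K(\pi,\Gamma)$; this is a finite extension of $K$, hence again a global function field, and algebraicity over $L$ coincides with algebraicity over $K$. Since $\pi,\Gamma\subset K_v$ and $K$ is dense in $K_v$, the closure of $L$ in $K_v$ equals $K_v$, so the completion of $L$ at the induced valuation is again $K_v$. Replacing $K$ by $L$ we may assume $\pi,\Gamma\subset K$. Let $\F_q$ denote the constant field of $K$; then $\pi$ is transcendental over $\F_q$, and $K/\F_q(\pi)$ is a finite extension, so $\F_q((\pi))$ embeds in $K_v$ and $K_v/\F_q((\pi))$ is a finite extension of some degree $D$. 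Fix an $\F_q((\pi))$-basis $\alpha_{1},\ldots,\alpha_{D}$ of $K_v$ with $\alpha_{j}\in K$ (possible by density). Every $x\in K_v$ admits a unique representation $x=\sum_{j}f_{j}(\pi)\alpha_{j}$ with $f_{j}\in\F_q((\pi))$, and $x$ is algebraic over $K$ if and only if each $f_{j}$ is algebraic over $\F_q(\pi)$; by Fact B applied over $\F_q(\pi)$ this is equivalent to $p$-automaticity of the standard Laurent coefficient sequence of each $f_{j}$.

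\textbf{Transfer principle.} Property B for $(\Gamma,\pi)$ follows once one establishes: the $(\Gamma,\pi)$-expansion $\{a_{n}\}$ of $x$ is $p$-automatic if and only if each $f_{j}$ has $p$-automatic Laurent coefficients. To set this up, expand each $\gamma\in\Gamma$ as $\gamma=\sum_{j}g_{\gamma,j}(\pi)\alpha_{j}$ with $g_{\gamma,j}\in\F_q((\pi))$; since $\Gamma$ is finite, the family $\{g_{\gamma,j}\}$ is finite. Substituting into $x=\sum_{n}a_{n}\pi^{n}$ gives
$$f_{j}(\pi)=\sum_{n}g_{a_{n},j}(\pi)\,\pi^{n}.$$
The forward half of the transfer follows from standard closure properties of $p$-automatic sequences: if $\{a_{n}\}$ is automatic then so is the coding $(g_{a_{n},j})_{n}$, and $f_{j}$ is then an $\F_q((\pi))$-linear combination of shifts of finitely many algebraic Laurent series, hence algebraic, with $p$-automatic coefficients by Christol.

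\textbf{Main obstacle.} The harder direction is the converse: recovering $p$-automaticity of $\{a_{n}\}$ from $p$-automaticity of the $f_{j}$. Injectivity of $\gamma\mapsto(g_{\gamma,1},\ldots,g_{\gamma,D})$ on $\Gamma$ (a consequence of basis uniqueness) means that $\{a_{n}\}$ is determined by the joint sequence $\bigl((g_{a_{n},j})_{j}\bigr)_{n}$, so the task reduces to extracting the sequences $(g_{a_{n},j})_{n}$ from the convolutions $f_{j}=\sum_{n}g_{a_{n},j}(\pi)\pi^{n}$ without losing $p$-automaticity. This is essentially an inverse to a Cauchy product in a Laurent-series ring with algebraic coefficients, and is where I expect the bulk of the technical work. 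I would attempt this step using Cartier-operator methods analogous to those in Christol's original proof, exploiting that the finite family $\{g_{\gamma,j}\}$ lies in a finite-dimensional Cartier-stable $\F_q$-subspace of $\F_q((\pi))$, so that the extraction can be implemented by a finite-state automaton; this would close the equivalence and, combined with the classical Christol theorem, yield property B.
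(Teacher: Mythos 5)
Your proposal does not prove the statement — and it is worth noting that the statement is a \emph{conjecture} in the paper, whose proof is precisely what remains open. Your ``only if'' direction is correct and complete (a minor variant of the paper's Lemma \ref{necessity of B}: the paper uses the expansions of $\gamma\pi^k$, you use $\gamma_0/(1-\pi)$; both work). The trouble is in the ``if'' direction, where there are two gaps. The first is your unproved assertion that, writing $x=\sum_j f_j(\pi)\alpha_j$, algebraicity of $x$ over $K$ is \emph{equivalent} to algebraicity of every coordinate $f_j$ over $\F_q(\pi)$. One implication is formal, but the other amounts to the equality $[K^a:\F_q(\pi)^a]=[K_v:\F_q((\pi))]$, which is the paper's Theorem \ref{ext deg of alg cls} and is genuinely nontrivial: the paper proves it by splitting $K/\F_q(\pi)$ into separable and purely inseparable steps, and the purely inseparable case is itself proved \emph{using} Christol's theorem and closure of automatic sequences under arithmetic subsequences. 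This same assertion is also what you need to know that the digit coordinates $g_{\gamma,j}$ are algebraic, so even your ``forward'' transfer silently depends on it (that half could instead be done directly, as in the paper's Lemma \ref{atm to alg}, by writing $x=\sum_{\gamma\in\Gamma}\gamma x_\gamma$ with each $x_\gamma$ algebraic by Christol). This gap is fillable, but only by reproducing a substantial theorem you did not prove.

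The second gap is fatal: your ``main obstacle'' — recovering $p$-automaticity of $\{a_n\}$ from $p$-automaticity of the coefficients of the convolutions $f_j=\sum_n g_{a_n,j}(\pi)\pi^n$ — is exactly the open content of the conjecture, and you do not resolve it; you only state that you ``would attempt'' a Cartier-operator argument. Since the $g_{\gamma,j}$ are in general infinite Laurent series, every digit $a_k$ contributes to each coefficient of each $f_j$, so inverting this Cauchy product is an unbounded-carry problem, and no argument (automaton-theoretic or otherwise) is known that it preserves automaticity; the paper explicitly says it does not prove this remaining part. What the paper does instead is prove the conjecture when $\Gamma$ is additively closed (Theorem \ref{add-cls case of B}): there one may take $\alpha_1,\ldots,\alpha_u$ to be an $\F_p$-basis of $\Gamma$ itself, so each $g_{\gamma,j}$ is a \emph{constant} in $\F_p$, the convolution degenerates to $f_j=\sum_n a_n^{(j)}\pi^n$, and the digits are read off coordinatewise — which is exactly why that case avoids your obstacle. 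In sum, your proposal establishes only what the paper's Lemmas \ref{necessity of B} and \ref{atm to alg} already establish (and that modulo Theorem \ref{ext deg of alg cls}), while the crux remains a plan rather than a proof.
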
 

It is easy to show $\pi$ and elements of $\Gamma$ are algebraic if $(\Gamma,\pi)$ has property B (see Lemma \ref{necessity of B}). An equivalent but more concrete statement of Conjecture \ref{Property B} is given at the end of Section 3. Though we could not verify Conjecture \ref{Property B} for its full generality, we do confirm it when $\Gamma$ is additively closed (see Theorem \ref{add-cls case of B}), where we say $\Gamma$ is additively closed if $\mu+\nu\in \Gamma$ for all $\mu,\nu\in\Gamma$.  

The paper is organized as follows. In section 2, we study expansions of rational elements and prove Theorem \ref{Property A}. In section 3, we study expansions of algebraic elements and prove Conjecture \ref{Property B} for additively closed $\Gamma$, which gives a generalization of Christol's theorem. In section 4, we use our generalization of Christol's theorem to prove a result on $\beta$-expansions of formal Laurent series (see Theorem \ref{beta expansions}), which answers questions posed in \cite{HM} and \cite{SS}. 

\subsection*{Notations} Let $K$ be a global function field with constant field $\F_q$, and $v$ be a normalized nontrivial valuation on $K$. The valuation ring of $K$ associated to $v$ is denoted by $A$, and the maximal ideal of $A$ is denoted by $M$. Let $K_v, A_v, M_v$ be completions of $K, A, M$ respectively. Write $f:=[A_v/M_v:\F_q]=[A/M:\F_q]$. Let $\pi$ be an element of $K_v$ with $v(\pi)=e>0$. Let $\Gamma\subset A_v$ be a complete set of representatives of $A_v/\pi A_v$. 

\section{Expansions of rational elements} 
Denote by $\F_q(\pi)$ the field generated by $\pi$ over $\F_q$, and by $\F_q((\pi))$ the completion of $\F_q(\pi)$ in $K_v$. Then  $[K_v:\F_q((\pi))]=ef$ (cf. \cite[Theorem 4.14]{B}). Moreover, $\pi A_v=M_v^e$, which implies $|A_v/\pi A_v|=|A_v/M_v|^e=q^{ef}$. Thus $A_v/\pi A_v$ is an $\F_q$-linear space of dimension $ef$. Furthermore if $\pi\in A$, then $\F_q(\pi)$ is a subfield of $K$ and $A/\pi A\cong A_v/\pi A_v$. Choose $\alpha_1,\dots,\alpha_{ef}\in A$ whose reductions modulo $\pi$ are $\F_q$-linearly independent, i.e., the reductions of $\alpha_1,\dots,\alpha_{ef}$ modulo $\pi$ consist of an $\F_q$-basis of $A/\pi A$. Then $\alpha_1,\dots,\alpha_{ef}$ are $\F_q((\pi))$-linearly independent, hence consist of an $\F_q((\pi))$-basis of $K_v$. On the other hand, $\alpha_1,\dots,\alpha_{ef}$ are also $\F_q(\pi)$-linearly independent, which implies  $[K:\F_q(\pi)]\ge [K_v:\F_q((\pi))]$. 

This section is mainly devoted to proving Theorem \ref{Property A}. First we need some lemmas.  

\begin{lem}\label{Necessity of A} 
Suppose $\pi\in A$ and $\Gamma\subset A$. If $[K:\F_q(\pi)]>[K_v:\F_q((\pi))]$, then $(\Gamma,\pi)$ can not have property A. 
\end{lem}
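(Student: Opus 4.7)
The plan is to produce a rational element $\beta\in K$ whose expansion with respect to $(\Gamma,\pi)$ is not ultimately periodic, thereby falsifying the ``rational implies periodic'' half of property A.

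First I would use the fundamental identity for finite extensions of global function fields,
\[
\sum_{w\mid v_\pi}e(w\mid v_\pi)\,f(w\mid v_\pi)=[K:\F_q(\pi)],
\]
where $v_\pi$ is the $\pi$-adic place of $\F_q(\pi)$ (well defined because $v(\pi)=e>0$ forces $\pi$ to be transcendental over $\F_q$, and $[K:\F_q(\pi)]<\infty$). The place $v$ itself contributes $e(v\mid v_\pi)f(v\mid v_\pi)=ef$, so the hypothesis $[K:\F_q(\pi)]>ef$ guarantees a second place $w\ne v$ of $K$ above $v_\pi$, and in particular $w(\pi)=e(w\mid v_\pi)\ge 1$.

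Next, set $C:=\min_{\gamma\in\Gamma\setminus\{0\}}w(\gamma)$, a finite integer. By weak approximation at the distinct places $v,w$, I would choose $\beta\in K$ with $v(\beta)\ge 0$ and $w(\beta)<C$. Write $\beta=\sum_{n\ge 0}a_n\pi^n$ with $a_n\in\Gamma$; equivalently set $\beta_0:=\beta$ and $\beta_{n+1}:=(\beta_n-a_n)/\pi$, where $a_n\in\Gamma$ is the unique lift of $\beta_n\bmod \pi A_v$. Inductively each $\beta_n$ lies in $A_v\cap K=A$. Since $w(\beta_n)<C\le w(a_n)$ (with the convention $w(0)=+\infty$), the strict ultrametric inequality gives $w(\beta_n-a_n)=w(\beta_n)$, hence $w(\beta_{n+1})=w(\beta_n)-w(\pi)$. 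An immediate induction produces $w(\beta_n)=w(\beta)-n\,w(\pi)\to -\infty$, so the $\beta_n$ are pairwise distinct.

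Were $\{a_n\}$ ultimately periodic with period $L$ beyond some index $N$, the tail identity $\beta_n=\sum_{k\ge 0}a_{n+k}\pi^k$ would force $\beta_n$ to depend only on $n\bmod L$ for $n\ge N$, leaving at most $L$ distinct values — contradicting the unboundedness of $-w(\beta_n)$. Thus the expansion of the rational element $\beta$ is not ultimately periodic, and $(\Gamma,\pi)$ cannot have property A. The one delicate point is producing the second place $w$; once the fundamental identity is invoked, the remainder is a clean ultrametric computation exploiting that $\Gamma\subset A$ constrains only the $v$-valuations of the digits, while $w$ is free to detect the element $\beta$.
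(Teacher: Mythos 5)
Your proof is correct, but it takes a genuinely different route from the paper's. The paper argues by linear algebra: it extends $\alpha_1,\dots,\alpha_{ef}$ to an $\F_q(\pi)$-basis $\alpha_1,\dots,\alpha_h$ of $K$, writes every digit $\gamma\in\Gamma$ in coordinates $\gamma^{(i)}\in\F_q(\pi)$, picks a digit $\gamma'$ realizing the minimal coordinate valuation $m(\Gamma)$, and then builds a rational $x\equiv\gamma'\bmod \pi A$ whose $\F_q^h$-coordinates differ from the leading coefficients of $\gamma'$; assuming property A, uniqueness of coordinates with respect to the basis forces these two to coincide, a contradiction. You instead work with places: the fundamental identity $\sum_{w\mid v_\pi}e_wf_w=[K:\F_q(\pi)]$ shows that the hypothesis $[K:\F_q(\pi)]>ef=[K_v:\F_q((\pi))]$ is \emph{exactly} the existence of a second place $w\neq v$ above $v_\pi$, and then weak approximation plus the ultrametric tail computation $w(\beta_{n+1})=w(\beta_n)-w(\pi)$ produces a rational element whose digit tails $\beta_n$ are pairwise distinct, which is incompatible with ultimate periodicity. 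Your version is more conceptual (it names the obstruction, and makes transparent why $\pi\in A$ and $\Gamma\subset A$ are needed: they keep every tail $\beta_n$ inside $K$ so that $w$ can see it), and it even produces an explicit rational witness. The price is the one nontrivial external input: since $K/\F_q(\pi)$ may be inseparable, the fundamental identity is not automatic but does hold for arbitrary finite extensions of global function fields (global function fields are defectless; equivalently, the integral closure of $\F_q[\pi]$ in $K$ is module-finite, cf. Stichtenoth, Theorem 3.1.11, or Serre, \emph{Local Fields}, I, \S 4), so you should cite this explicitly. The paper's argument, by contrast, is elementary and self-contained, using nothing beyond uniqueness of basis representations.
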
 

\begin{proof}
Write $h:=[K:\F_q(\pi)]$. Choose $\alpha_1,\ldots,\alpha_{ef}\subset A$ such that their reductions modulo $\pi A$ are linearly independent over $\F_q$. Then $\alpha_1,\ldots,\alpha_{ef}$ are linearly independent over $\F_q(\pi)$. Extend $\alpha_1,\ldots,\alpha_{ef}$ to an $\F_q(\pi)$-basis of $K$, say $\alpha_1,\ldots,\alpha_{ef},\alpha_{ef+1},\ldots,\alpha_{h}\in A$. Then $x\in K$ if and only if there exist uniquely determined elements $x_1,\ldots,x_h\in \F_q(\pi)$ such that $x=x_1\alpha_1+\cdots+x_h\alpha_h$.  
In particular, for $\gamma\in\Gamma\subset A$, write  
$$\gamma:=\gamma^{(1)}\alpha_1+\cdots+\gamma^{(h)}\alpha_h,$$ 
where $\gamma^{(i)}\in \F_q(\pi)$. 
Note that though $v(\gamma)\ge 0$, it is possible $v(\gamma^{(i)})<0$. For any $\gamma\in \Gamma$, define 
$$m(\gamma):=\frac{1}{e}\min_{1\le i\le h} \{v(\gamma^{(i)})\}.$$ 
Then each $\gamma^{(i)}$ is of the form 
$$\gamma^{(i)}=\sum_{n\ge m(\gamma)}b_{n,\gamma}^{(i)}\pi^n,$$ 
where $b_{n,\gamma}^{(i)}\in\F_q$, and $b_{m(\gamma),\gamma}^{(i)}\neq 0$ for at least one $i$.  Let 
$$m(\Gamma):=\min_{\gamma\in\Gamma} \{m(\gamma)\}.$$ 
Then obviously $m(\Gamma)\le 0$. Choose $\gamma'\in \Gamma$ such that $m(\gamma')=m(\Gamma)=:m$. Set  
$$\Lambda_{\gamma'}:=\{(\lambda_1,\ldots,\lambda_h)\in \F_q^h: \lambda_1\alpha_1+\cdots+\lambda_h\alpha_h\equiv \gamma'\mod \pi A\}.$$ 
Since the reductions of $\alpha_1,\dots,\alpha_{ef}$ consist of an $\F_q$-basis of $A/\pi A$, for any $\lambda_{ef+1},\dots, \lambda_h \in \F_q$, there exists a unique $(\lambda_1,\dots,\lambda_{ef})$ such that 
$(\lambda_1,\ldots,\lambda_h)\in\Lambda_{\gamma'}$. So $|\Lambda_{\gamma'}|=q^{h-ef}$. Therefore whether $(b_{m,\gamma'}^{(1)},\ldots,b_{m,\gamma'}^{(h)})$ belongs to $\Lambda_{\gamma'}$ or not, we may choose $(x_1,\ldots,x_h)\in\Lambda_{\gamma'}$ that is different from $(b_{m,\gamma'}^{(1)},\ldots,b_{m,\gamma'}^{(h)})$. Take 
\begin{equation} \label{linear expression i}
    x=x_1\alpha_1+\cdots+x_h\alpha_h.
\end{equation} 
Then $x\equiv \gamma\mod \pi A$. Let 
$$x=\sum_{n\ge 0}a_n\pi^n$$ 
be the expansion of $x$ with respect to $(\Gamma,\pi)$. Then $a_0=\gamma'$. Write each $a_n$ as 
$$a_n=a_n^{(1)}\alpha_1+\cdots+a_n^{(h)}\alpha_h,$$
where $a_n^{(i)}=\gamma^{(i)}$ for $1\le i\le h$ if $a_n=\gamma\in\Gamma$. Then  
\begin{equation}\label{linear expression ii}
x=\sum_{n\ge 0}\sum_{i=1}^h a_n^{(i)}\alpha_i \pi^n=\sum_{i=1}^h \alpha_i \sum_{n\ge 0}a_n^{(i)}\pi^n. 
\end{equation}

Suppose $(\Gamma,\pi)$ has property A. Then $\{a_n\}_{n\ge 0}$ is ultimately periodic. Since $a_n^{(1)},\ldots,a_n^{(h)}$ are uniquely determined by $a_n$, $\{a_n^{(i)}\}_{n\ge 0}$ is also ultimately periodic for $1\le i\le h$. This implies $(1-\pi^L)\sum_{n\ge 0}a_n^{(i)}\pi^n\in \F_q(\pi)$ for some positive integer $L$, and hence $\sum_{n\ge 0}a_n^{(i)}\pi^n\in \F_q(\pi)$. Hence both (\ref{linear expression i}) and (\ref{linear expression ii}) are linear expressions of $x$ with respect to the basis $\alpha_1,\ldots,\alpha_h$. Thus we have 
$$x_i=\sum_{n\ge 0}a_n^{(i)}\pi^n {\rm ~for~ } 1\le i\le h.$$ 
Recall $a_0=\gamma'$ and $m=m(\gamma')=m(\Gamma)\le 0$. We deduce  
$$\sum_{n\ge 0}a_n^{(i)}\pi^n-b_{m,\gamma'}^{(i)}\pi^m=x_i-b_{m,\gamma'}^{(i)}\pi^m\in \pi^{m+1}\F_q[[\pi]],$$
which could not happen if $m<0$. So we must have $m=0$. It then implies $m=0$ and $x_i=b_{m,\gamma'}^{(i)}$ as both $x_i$ and $b_{m,\gamma'}^{(i)}$ are in $\F_q$. This contradicts to $(x_1,\ldots,x_h)\neq (b_{m,\gamma'}^{(1)},\ldots,b_{m,\gamma'}^{(h)})$. Thus $(\Gamma,\pi)$ can not have property A. the proof of Lemma \ref{Necessity of A} is finished. 
\end{proof}

Let $\pi\in A$. For $\alpha_1,\ldots,\alpha_{ef}\in A$ whose reductions modulo $\pi A$ are linearly independent over $\F_q$, the set 
$$\F_q\alpha_1+\cdots+\F_q\alpha_s=\{c_1\alpha_1+\cdots+c_{ef}\alpha_{ef}: c_1,\ldots,c_{ef}\in \F_q\}$$ 
is a complete set of representatives of $A$ modulo $\pi A$. We show that if $[K:\F_q(\pi)]=[K_v:\F_q((\pi))]$, then $(\F_q\alpha_1+\cdots+\F_q\alpha_{ef},\pi)$ has property A.

\begin{lem}\label{sufficiency step i}
Suppose $\pi\in A$ and $[K:\F_q(\pi)]=[K_v:\F_q((\pi))]$. For $\alpha_1,\ldots,\alpha_{ef}\in A$ such that their reductions modulo $\pi A$ are linearly independent over $\F_q$, let $\Gamma=\F_q\alpha_1+\cdots+\F_q\alpha_s$. 
Then $(\Gamma,\pi)$ has property A. 
\end{lem}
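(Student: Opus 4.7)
The plan is to exploit the equality $[K:\F_q(\pi)] = ef = [K_v:\F_q((\pi))]$ to reduce everything to classical Fact A, transferred through the common basis $\alpha_1,\ldots,\alpha_{ef}$.

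First I would check that $\alpha_1,\ldots,\alpha_{ef}$ serve \emph{simultaneously} as a basis of $K_v$ over $\F_q((\pi))$ and of $K$ over $\F_q(\pi)$. The preamble of this section already shows they are $\F_q((\pi))$-linearly independent, giving the first basis by a dimension count; the same discussion notes they are $\F_q(\pi)$-linearly independent, and the degree hypothesis now upgrades this to a basis of $K$ as well. This common-basis observation is the crux of the lemma.

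For the \emph{if} direction, given an ultimately periodic expansion $x = \sum_{n \ge m} a_n \pi^n$ with $a_n \in \Gamma$, I would write each $a_n = \sum_{i=1}^{ef} b_{n,i}\alpha_i$ with $b_{n,i} \in \F_q$ and swap the order of summation to obtain $x = \sum_{i=1}^{ef} \alpha_i \bigl(\sum_{n \ge m} b_{n,i} \pi^n\bigr)$. Ultimate periodicity of $\{a_n\}$ forces ultimate periodicity of each coordinate sequence $\{b_{n,i}\}$, so Fact A places each inner series in $\F_q(\pi) \subseteq K$, making $x \in K$. For the \emph{only if} direction, given $x \in K$, I would use the basis to write $x = \sum_i x_i \alpha_i$ uniquely with $x_i \in \F_q(\pi)$; inside $\F_q((\pi))$, Fact A gives each $x_i$ an ultimately periodic $\pi$-adic expansion $x_i = \sum_{n \ge m_i} b_{n,i} \pi^n$ with $b_{n,i} \in \F_q$. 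Setting $b_{n,i} := 0$ for $n < m_i$ and $a_n := \sum_i b_{n,i} \alpha_i \in \Gamma$ yields a series $\sum_{n \ge m} a_n \pi^n$ with $\Gamma$-coefficients summing to $x$; by uniqueness of $(\Gamma,\pi)$-expansions, this must be the expansion of $x$. Taking a common period of the finitely many ultimately periodic sequences $\{b_{n,i}\}$ then gives ultimate periodicity of $\{a_n\}$.

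The potentially delicate step is the uniqueness argument at the end: one must observe that since the constructed series has coefficients in $\Gamma$ and converges to $x$, it must coincide with the $(\Gamma,\pi)$-expansion of $x$ rather than merely be \emph{a} representation. This is immediate from the uniqueness built into~(\ref{expansion}), but worth flagging. Past that, the argument is a clean shuffle through the simultaneous basis $\alpha_1,\ldots,\alpha_{ef}$ and requires no ingredient beyond Fact A.
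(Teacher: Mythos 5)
Your proposal is correct and follows essentially the same route as the paper: both use the hypothesis to make $\alpha_1,\ldots,\alpha_{ef}$ a simultaneous $\F_q((\pi))$-basis of $K_v$ and $\F_q(\pi)$-basis of $K$, decompose coordinate-wise, apply classical Fact A to each coordinate series, and invoke uniqueness of $(\Gamma,\pi)$-expansions to identify the constructed series with the expansion of $x$. The paper merely packages your two directions into a single chain of equivalences.
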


\begin{proof}
Since $\alpha_1,\ldots,\alpha_{ef}\in A$ consist of an $\F_q((\pi))$-basis of $K_v$, there exist $x_1,\ldots,x_{ef}\in \F_q((\pi))$ such that $x=x_1\alpha_1+\cdots+x_{ef}\alpha_{ef}$. Write each $x_i$ as 
$$x_i:=\sum_{n\ge m}a_n^{(i)}\pi^n, a_n^{(i)}\in \F_q.$$ 
By letting $a_n:=a_n^{(1)}\alpha_1+\cdots+a_n^{(ef)}\alpha_{ef}\in \Gamma$, we derive  
$$x=\sum_{n\ge m}(a_n^{(1)}\alpha_1+\cdots+a_n^{(ef)}\alpha_{ef})\pi^n=\sum_{n\ge m}a_n\pi^n.$$ 
So $\sum_{n\ge m}a_n\pi^n$ is the expansion of $x$ with respect to $(\Gamma,\pi)$.  

Note that under the hypothesis $[K:\F_q(\pi)]=[K_v:\F_q((\pi))]$, $\alpha_1,\ldots,\alpha_{ef}$ also consist of an $\F_q(\pi)$-basis of $K$. We deduce that $x\in K$ if and only if $x_1,\ldots,x_{ef}\in \F_q(\pi)$, if and only if each sequence $\{a_n^{(i)}\}_{n\ge m}$ is ultimately periodic, if and only if $\{a_n\}_{n\ge m}$ is ultimately periodic. Hence $(\Gamma,\pi)$ has property A. This shows Lemma \ref{sufficiency step i}. 
\end{proof}

Assume $\pi\in A$, $\Gamma\subset A$, and $[K:\F_q(\pi)]=[K_v:\F_q((\pi))]$. Let $\alpha_1,\ldots,\alpha_{ef}$ be elements of $A$ whose reductions modulo $\pi A$ are linearly independent over $\F_q$. Then $\alpha_1,\ldots,\alpha_{ef}$ consist of both $\F_q((\pi))$-basis of $K_v$ and $\F_q(\pi)$-basis of $K$. Each $\gamma\in \Gamma$ can be write as 
$$\gamma=\gamma^{(1)}\alpha_1+\cdots+\gamma^{(ef)}\alpha_{ef}$$ 
with $\gamma^{(i)}\in \F_q(\pi)$. We show that $(\Gamma,\pi)$ has property $A$ if for every $\gamma\in\Gamma$, all its components $\gamma^{(i)}$ are polynomials in $\pi$. 

\begin{lem}\label{sufficiency step ii}
Let $\pi\in A$, $\Gamma\subset A$, and $[K:\F_q(\pi)]=[K_v:\F_q((\pi))]$. Let $\alpha_1,\ldots,\alpha_{ef}$ be elements of $A$ whose reductions modulo $\pi A$ are linearly independent over $\F_q$. If $\Gamma\subset\F_q[\pi]\alpha_1+\cdots+\F_q[\pi]\alpha_{ef}$, then $(\Gamma,\pi)$ has property A. 
\end{lem}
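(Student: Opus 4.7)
The plan is to mimic the argument of Lemma \ref{sufficiency step i}, dealing with the extra freedom in $\Gamma$---whose digits are polynomials in $\pi$ in the $\alpha_i$-basis rather than scalars in $\F_q$---via a Laurent-versus-power-series dichotomy applied componentwise. For the easy direction, if $(a_n)$ is ultimately periodic with eventual period $L$ starting at index $N_0$, a geometric-series manipulation expresses $x$ as a finite prefix plus $\pi^{N_0}\bigl(\sum_{j=0}^{L-1} a_{N_0+j}\pi^j\bigr)/(1-\pi^L)$, which lies in $K$ because $\Gamma \subset \F_q[\pi]\alpha_1+\cdots+\F_q[\pi]\alpha_{ef} \subset K$ and $\pi \in K$.

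For the converse, suppose $x \in K$. The hypothesis $[K:\F_q(\pi)] = [K_v:\F_q((\pi))]$ ensures that $\alpha_1,\ldots,\alpha_{ef}$ form an $\F_q(\pi)$-basis of $K$; write $x = \sum_i x^{(i)}\alpha_i$ with $x^{(i)} \in \F_q(\pi)$, and pick $Q \in \F_q[\pi]$ clearing the denominators, so that $Qx^{(i)} \in \F_q[\pi]$ for every $i$. For $N \ge m$, define the tail $x_N := \sum_{k\ge 0} a_{N+k}\pi^k$; a short check gives $x_N \in K \cap A_v = A$. Since $a_N$ is determined by $x_N$ as the unique $\Gamma$-representative of $x_N \bmod \pi A_v$, and $x_{N+1} = (x_N - a_N)/\pi$, it suffices to show that $\{x_N : N \ge N^*\}$ is a finite set for some $N^*$; pigeonhole together with the deterministic recursion will then force $(x_N)$, and hence $(a_n)$, to be ultimately periodic.

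The crux is that the $i$-th $\alpha_i$-coordinate $(Qx_N)^{(i)}$ admits two descriptions. From the identity $Qx_N = \pi^{-N}\bigl(Qx - \sum_{n=m}^{N-1} Qa_n\pi^n\bigr)$ together with the hypothesis $\Gamma \subset \F_q[\pi]\alpha_1+\cdots+\F_q[\pi]\alpha_{ef}$, the coordinate $(Qx_N)^{(i)}$ is a Laurent polynomial in $\F_q[\pi,\pi^{-1}]$; while the tail expansion $Qx_N = \sum_{k \ge 0} Qa_{N+k}\pi^k$ simultaneously exhibits it as a formal power series in $\F_q[[\pi]]$. Since $\F_q[\pi,\pi^{-1}] \cap \F_q[[\pi]] = \F_q[\pi]$, we conclude $(Qx_N)^{(i)} \in \F_q[\pi]$. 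Reading off degrees from the Laurent expression yields $\deg (Qx_N)^{(i)} \le \max\bigl(\deg Qx^{(i)} - N,\, D'-1\bigr)$, where $D' := \max_{\gamma\in\Gamma,\,i}\deg Q\gamma^{(i)}$; for $N$ sufficiently large this bound is $D'-1$, independent of $N$. Hence $Qx_N$ eventually lies in the finite set $\{\sum_i P_i\alpha_i : P_i \in \F_q[\pi],\ \deg P_i < D'\}$, and so does $x_N = Q^{-1}(Qx_N)$. The main technical hurdle is precisely this Laurent-versus-power-series dichotomy together with the uniform degree bound; once those are in hand, the remainder is pigeonhole and determinism, mirroring the classical proof of Fact A for $\F_q(\pi)/\F_q((\pi))$ applied componentwise in the $\alpha_i$-basis.
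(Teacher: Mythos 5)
Your proof is correct, but it follows a genuinely different route from the paper's. The paper keeps everything at the level of digit sequences: writing each digit $a_n=\sum_{i}\bigl(\sum_{k=0}^{d}a_{n,k}^{(i)}\pi^k\bigr)\alpha_i$, it performs the carries to convert the $(\Gamma,\pi)$-expansion of $x$ into its expansion with respect to $\Gamma'=\F_q\alpha_1+\cdots+\F_q\alpha_{ef}$, whose digits are $b_n=\sum_i b_n^{(i)}\alpha_i$ with $b_n^{(i)}=\sum_{k=0}^{d}a_{n-k,k}^{(i)}$; Lemma \ref{sufficiency step i} then makes $\{b_n\}$ ultimately periodic with some period $L$, and a pigeonhole argument on windows $(a_{n-d+iL},\ldots,a_{n+iL})$ of $d+1$ consecutive $\Gamma$-digits, combined with the carry relation, transfers the periodicity back to $\{a_n\}$. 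You instead generalize the classical finitely-many-remainders proof: after clearing denominators, the coordinates of the tails $x_N=\sum_{k\ge 0}a_{N+k}\pi^k$ are simultaneously Laurent polynomials in $\pi$ (from the prefix identity) and power series in $\pi$ (from the tail expansion), hence polynomials of degree bounded uniformly in $N$; so the tails eventually range over a finite set, and pigeonhole plus the deterministic recursion $x_{N+1}=(x_N-a_N)/\pi$ gives ultimate periodicity. Your argument does not invoke Lemma \ref{sufficiency step i} at all (it only needs the basis property supplied by $[K:\F_q(\pi)]=[K_v:\F_q((\pi))]$), so it is self-contained and arguably more transparent, whereas the paper's approach buys the reuse of its scalar-digit lemma at the cost of a more delicate double pigeonhole to undo the carries. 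The one point you should make explicit is that the coordinate projections $K_v\to\F_q((\pi))$ are continuous (standard for finite-dimensional spaces over a complete valued field), which justifies extracting coordinates termwise from the infinite tail sum and equating the two descriptions of $(Qx_N)^{(i)}$; the paper relies on the same fact implicitly in its own rearrangements, so this is a matter of presentation rather than a gap.
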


\begin{proof} 
For $x\in K_v$, let $x=\sum_{n\ge m}a_n\pi^n$ be the expansion of $x$ with respect to $(\Gamma,\pi)$. If $\{a_n\}_{n\ge m}$ is ultimately periodic, then it is easy to see $x$ is rational. 

Conversely let $x$ be rational. We need to show $\{a_n\}_{n\ge m}$ is ultimately periodic. For $\gamma\in \Gamma$, write $\gamma=\gamma^{(1)}\alpha_1+\cdots+\gamma^{(ef)}\alpha_{ef}$, where $\gamma^{(i)}\in \F_q(\pi)$ for $1\le i\le ef$. Since $\Gamma\subset \F_q[\pi]\alpha_1+\cdots+\F_q[\pi]\alpha_{ef}$,  
each $\gamma^{(i)}$ is a polynomial in $\pi$. Set  
$$d:=\max_{\gamma\in\Gamma, 1\le i\le ef} \deg \gamma^{(i)},$$ 
where $\deg \gamma^{(i)}$ is the degree of $\gamma^{(i)}$ as a polynomial in $\pi$ over $\F_q$. Then we can write each $a_n\in \Gamma$ as $a_n:=\sum_{i=1}^{ef} a_n^{(i)}\alpha_i$, where $a_n^{(i)}$ is of the form  
$$a_n^{(i)}=\sum_{k=0}^d a_{n,k}^{(i)}\pi^k,\ a_{n,k}^{(i)}\in \F_q.$$  
Thus 
$$x=\sum_{n\ge m}a_n\pi^n=\sum_{n\ge m} \pi^n \sum_{i=1}^{ef} a_n^{(i)}\alpha_i 
=\sum_{i=1}^{ef} \alpha_i \sum_{n\ge m} (\sum_{k=0}^d a_{n,k}^{(i)}\pi^k)\pi^n
=\sum_{i=1}^{ef} \alpha_i \sum_{n\ge m} \pi^n \sum_{k=0}^d a_{n-k,k}^{(i)},$$ 
where we set $a_{n-k,k}^{(i)}=0$ if $n-k<m$. Let $\Gamma':=\F_q\alpha_1+\cdots+\F_q\alpha_{ef}$. Denote by 
$$b_{n}^{(i)}:=\sum_{k=0}^d a_{n-k,k}^{(i)}\in\F_q, \quad 
{\rm and} \quad  
b_n:=\sum_{i=1}^{ef} b_n^{(i)}\alpha_i\in \Gamma'.$$ 
Then   
$$x=\sum_{i=1}^{ef} \alpha_i \sum_{n\ge m} b_n^{(i)}\pi^n=\sum_{n\ge m}b_n\pi^n$$ 
is the expansion of $x$ with respect to $(\Gamma',\pi)$. Therefore by Lemma \ref{sufficiency step i}, $\{b_n\}_{n\ge m}$ is ultimately periodic. Hence there are integers $N\ge m+d$ and $L>0$ such that 
\begin{equation}\label{eq2.1}
b_n=b_{n+L} \ {\rm for\ all }\ n\ge N.
\end{equation}

Fix any $n\ge N$. Consider $(d+1)$-tuples 
$$(a_{n-d+iL},a_{n-d+1+iL},\ldots,a_{n+iL})\in \Gamma^{d+1}$$ 
for $0\le i\le r^{d+1}$. Since $|\Gamma^{d+1}|=r^{d+1}$, there must exist $0\le i_1<i_2\le r^{d+1}$ such that 
$(a_{n-d+i_1L},\ldots,a_{n+i_1L})$ and $(a_{n-d+i_2L},\ldots,a_{n+i_2L})$ are equal. Take $n_0=n+i_1L$ and $j=i_2-i_1$. Then 
\begin{equation}\label{eq2.2}
(a_{n_0-d},\ldots,a_{n_0})=(a_{n_0-d+jL},\ldots, a_{n_0+jL}). 
\end{equation}

Now we show that (\ref{eq2.1}) and (\ref{eq2.2}) imply $a_{n_0+1}=a_{n_0+1+jL}$. By (\ref{eq2.1}), we have   
$$b_{n_0+1}=\sum_{k=0}^d a_{n_0+1-k,k}^{(i)}=\sum_{k=0}^d a_{n_0+1-k+jL,k}^{(i)}=b_{n_0+1+jL}. $$ 
Meanwhile by applying (\ref{eq2.2}) we may get  
$$a_{n_0+1-k,k}^{(i)}=a_{n_0+1-k+jL,k}^{(i)}$$ 
for $1\le k\le d$ and $1\le i\le {ef}$. It then follows that 
$$a_{n_0+1,0}^{(i)}=a_{n_0+1+jL,0}^{(i)}$$ 
for $1\le i\le {ef}$. Since there exists exactly one $\gamma\in\Gamma$ such that 
$$\gamma\equiv \sum_{i=1}^{ef} a_{n_0+1,0}^{(i)}\alpha_i \mod M,$$ 
we derive $a_{n_0+1}=\gamma=a_{n_0+1+jL}$. Finally an easy induction argument gives us $a_{n}=a_{n+jL}$ for all $n\ge n_0$. That is, $\{a_n\}_{n\ge m}$ is ultimately periodic. So we conclude that $(\Gamma,\pi)$ has property A. Lemma \ref{sufficiency step ii} is proved. 
\end{proof}

Now we are ready to give the proof of Theorem \ref{Property A}. 

\subsection*{Proof of Theorem \ref{Property A}} 
Assume $(\Gamma,\pi)$ has property A. For $\gamma\in\Gamma$, if 
$$\gamma=\sum_{n\ge m}a_n\pi^n$$
is the expansion of $\gamma$ with respect to $(\Gamma,\pi)$, then $m=0$, $a_0=\gamma$, and $a_n=0$ for all $n>0$. So its sequence of coefficients is ultimately periodic. Thus $\gamma$ is rational. By the same argument we can show that $\gamma\pi$ is rational for any $\gamma\in \Gamma$, which implies $\pi$ is rational. The equation  $[K:\F_q(\pi)]=[K_v:\F_q((\pi))]$ follows from Lemma \ref{Necessity of A}. This shows necessity. 
\\

Conversely suppose $\pi\in A$, $\Gamma\subset A$, and $[K:\F_q(\pi)]=[K_v:\F_q((\pi))]$. We show $(\Gamma,\pi)$ has property A. Let $x\in K_v$ and $x=\sum_{n\ge m}a_n\pi^n$ be the expansion of $x$ with respect to $(\Gamma,\pi)$. If the sequence $\{a_n\}_{n\ge m}$ is ultimately periodic, then $x$ is rational. It remains to prove $\{a_n\}_{n\ge m}$ is ultimately periodic if $x$ is rational. 

Let $\alpha_1,\ldots,\alpha_{ef}\in A$ such that their reductions modulo $\pi A$ are linearly independent over $\F_q$. Then $\alpha_1,\ldots,\alpha_{ef}$ consist of an $B:=\F_q(\pi)\cap\F_q[[\pi]]$-basis of $A$. For $\gamma\in \Gamma$, write 
$$\gamma=\gamma^{(1)}\alpha_1+\cdots+\gamma^{(ef)}\alpha_{ef}, \gamma^{(i)}\in B.$$ 
Then there exists a positive integer $L$, depending only on $\Gamma$, such that $(1-\pi^L)\gamma^{(i)}\in\F_q[\pi]$ for all $\gamma\in\Gamma$ and $1\le i\le {ef}$. Let 
$$\Gamma'=(1-\pi^L)\Gamma=\{(1-\pi^L)\gamma:\gamma\in\Gamma\}.$$
Since $(1-\pi^L)\gamma\equiv\gamma\mod \pi A$ and $\Gamma$ is a complete set of representatives modulo $\pi A$, $\Gamma'$ is also a complete set of representatives modulo $\pi A$. Note that $\Gamma'\subset \F_q[\pi]\alpha_1+\cdots+\F_q[\pi]\alpha_{ef}$. Then by Lemma \ref{sufficiency step ii}, $(\Gamma',\pi)$ has property A. 

Denote by $b_n:=(1-\pi^L)a_n$. Then $b_n\in(1-\pi^L)\Gamma=\Gamma'$. Since $x=\sum_{n\ge m}a_n\pi^n$ is the expansion of $x$ with respect to $(\Gamma,\pi)$, 
$$(1-\pi^L)x=\sum_{n\ge m}(1-\pi^L)a_n\pi^n=\sum_{n\ge m}b_n\pi^n$$ 
is the expansion of $(1-\pi^L)x$ with respect to $(\Gamma',\pi)$. Hence $\{b_n\}_{n\ge m}=\{(1-\pi^L)a_n\}_{n\ge m}$ is ultimately periodic, implying that $\{a_n\}_{n\ge m}$ is ultimately periodic. Now we conclude that $(\Gamma,\pi)$ has property A. The proof of Theorem \ref{Property A} is ended. \hfill$\Box$ \\

\begin{rem}
For given field $K$, valuation $v$ and positive integer $e$, there does not necessarily exists $\pi\in K$ such that $[K:\F_q(\pi)]=[K_v:\F_q((\pi))]$. An interesting question is to decide for which $K,v$ and $e$ that such a $\pi$ exists. 
\end{rem}

\section{Expansions of algebraic elements} 

In this section we consider expansions of algebraic elements. For a nonnegative integer $k$ and any  $\gamma\in\Gamma$, if $\gamma\pi^k=\sum_{n\ge m}a_n\pi^n$ is the expansion of $\gamma\pi^k$ with respect to $(\Gamma,\pi)$, then $m=k$, $a_k=\gamma$, and $a_n=0$ for $n>k$. So $\{a_n\}_{n\ge m}$ is ultimately periodic. Hence $(\Gamma,\pi)$ has property B yields $\pi$ and all $\gamma\in\Gamma$ are algebraic. We conclude that

\begin{lem}\label{necessity of B}
    If $(\Gamma,\pi)$ has property B, then $\pi$ and elements of $\Gamma$ are algebraic. 
\end{lem}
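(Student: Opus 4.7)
The plan is to exhibit, for each $\gamma\in\Gamma$ and each integer $k\ge 0$, an element of $K_v$ whose $(\Gamma,\pi)$-expansion has an obviously $p$-automatic sequence of coefficients, and then to invoke property B to deduce algebraicity.

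Fix $\gamma\in\Gamma$ and $k\ge 0$. By the uniqueness of the expansion, the expansion of $\gamma\pi^k$ with respect to $(\Gamma,\pi)$ must be the trivial one: $\gamma\pi^k=\sum_{n\ge k}a_n\pi^n$ with $a_k=\gamma$ and $a_n=0$ for all $n>k$. This sequence is eventually zero, hence certainly ultimately periodic; one checks directly from the definition that its $p$-kernel consists only of (shifts of) itself and the all-zero sequence, so it is $p$-automatic. Property B therefore forces $\gamma\pi^k$ to be algebraic over $K$. Specialising to $k=0$ shows that every $\gamma\in\Gamma$ is algebraic over $K$.

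To handle $\pi$ itself, note that $|\Gamma|=r=q^{ef}\ge 2$ while exactly one element of $\Gamma$ represents the zero class modulo $\pi A_v$; we may therefore choose $\gamma_0\in\Gamma$ with $v(\gamma_0)=0$, so $\gamma_0$ is a unit of $A_v$ and in particular nonzero in $K_v$. Applying the same argument with $k=1$ gives that $\gamma_0\pi$ is algebraic over $K$, and since the elements of $K_v$ algebraic over $K$ form a subfield, $\pi=(\gamma_0\pi)\cdot\gamma_0^{-1}$ is algebraic.

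There is no genuine obstacle here; the two small points that need to be verified are the uniqueness of the expansion (so that $\gamma\pi^k$ really does have the trivial expansion) and the routine fact that an eventually-zero sequence lies in the scope of property B, i.e.\ is $p$-automatic. Both are immediate from the definitions recalled in the introduction.
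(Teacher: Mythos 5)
Your proof is correct and takes essentially the same route as the paper: the paper likewise notes that $\gamma\pi^k$ has the trivial expansion, whose coefficient sequence is ultimately periodic and hence $p$-automatic, so property B forces $\gamma\pi^k$ to be algebraic for every $\gamma\in\Gamma$ and $k\ge 0$, from which the algebraicity of the elements of $\Gamma$ (case $k=0$) and of $\pi$ (dividing $\gamma\pi$ by $\gamma$) follows. One negligible slip: "exactly one element of $\Gamma$ represents the zero class" only yields $v(\gamma_0)<e$, not $v(\gamma_0)=0$ (for that, take $\gamma_0$ representing the class of $1$), but your argument only needs $\gamma_0\neq 0$, which does follow.
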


This confirms the only if part of Conjecture \ref{Property B}. Moreover, by using property of automatic sequences, one direction of the if part of Conjecture \ref{Property B} is easy to verify. 

\begin{lem}\label{atm to alg}
Assume $\pi$ and elements of $\Gamma$ are algebraic. For $x\in K_v$, if its expansion with respect to $(\Gamma,\pi)$ is $p$-automatic, then $x$ is algebraic. 
\end{lem}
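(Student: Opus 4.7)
The plan is to reduce the statement to the classical Christol theorem (Fact B) applied inside the subfield $\F_q((\pi))\subset K_v$. The first thing to verify is that $\pi$ is transcendental over $\F_q$: since $v(\pi)=e>0$, we have $\pi\in M_v$, whereas any nonzero element of $K_v$ algebraic over $\F_q$ satisfies $\alpha^{q^n-1}=1$ for some $n$ and therefore has valuation $0$. Consequently $\F_q(\pi)$ is a rational function field, its completion with respect to the $\pi$-adic topology coincides with $\F_q((\pi))\subset K_v$, and Fact B applies verbatim to series $\sum c_n\pi^n$ with $c_n\in\F_q$.

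Given the expansion $x=\sum_{n\ge m}a_n\pi^n$ with $\{a_n\}$ assumed $p$-automatic, for each $\gamma\in\Gamma$ I would introduce the $\F_p$-valued indicator sequence $\chi_\gamma(n):=1$ if $a_{n+m}=\gamma$ and $0$ otherwise. Since $\chi_\gamma(n)$ is a fixed function of $a_{n+m}$, the $p$-kernel of $\chi_\gamma$ is the pointwise image of the $p$-kernel of $\{a_{n+m}\}_{n\ge 0}$ under this function, and is therefore finite; so $\chi_\gamma$ is $p$-automatic. By Fact B the element $f_\gamma(\pi):=\sum_{n\ge 0}\chi_\gamma(n)\pi^n\in\F_q[[\pi]]$ is algebraic over $\F_q(\pi)$.

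Reassembling yields $\pi^{-m}x=\sum_{n\ge 0}a_{n+m}\pi^n=\sum_{\gamma\in\Gamma}\gamma\cdot f_\gamma(\pi)$, a finite sum. Each $\gamma$ is algebraic over $K$ by hypothesis, and each $f_\gamma(\pi)$ is algebraic over $\F_q(\pi)$; since $\pi$ is algebraic over $K$, every element of $\F_q(\pi)$ is algebraic over $K$, so by transitivity each $f_\gamma(\pi)$ is itself algebraic over $K$. Therefore $\pi^{-m}x$ is a polynomial expression in elements algebraic over $K$ and is itself algebraic over $K$, and multiplying by $\pi^m$ (also algebraic over $K$) gives that $x$ is algebraic over $K$. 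The only step that is not purely formal is the transfer of $p$-automaticity from the $\Gamma$-valued master sequence $\{a_n\}$ to the $\F_p$-valued indicators $\chi_\gamma$; this has to be read off directly from the finite-kernel definition in the introduction, but is standard. Everything else is an assembly argument from the classical Christol theorem and elementary transcendence theory.
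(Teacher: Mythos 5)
Your proposal is correct and takes essentially the same route as the paper: both decompose $x$ (after normalizing the starting index) as $\sum_{\gamma\in\Gamma}\gamma\, x_\gamma$, where $x_\gamma=\sum_{n:\,a_n=\gamma}\pi^n$ is the indicator series of $\gamma$, apply Christol's theorem to each $x_\gamma$, and conclude by algebraicity of $\pi$ and the elements of $\Gamma$. You simply make explicit two steps the paper leaves implicit, namely that $\pi$ is transcendental over $\F_q$ (so Christol applies in $\F_q((\pi))$) and that the $\F_p$-valued indicator sequences inherit $p$-automaticity from $\{a_n\}$ as codings.
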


\begin{proof}
Let $x=\sum_{n\ge m}a_n\pi^n$ be the expansion of $x$ with respect to $(\Gamma,\pi)$. Without loss of generality, we may assume $x\in A_v$, i.e., $m=0$. If $\{a_n\}_{n\ge 0}$ is $p$-automatic, then for any $\gamma\in\Gamma$, 
$$x_{\gamma}:=\sum_{n\ge 0, a_n=\gamma}x^n$$ 
is algebraic. Hence $x=\sum_{\gamma\in\Gamma}\gamma x_{\gamma}$ is algebraic. This shows Lemma \ref{atm to alg}. 
\end{proof}

Now by Lemma \ref{necessity of B} and \ref{atm to alg}, to verify Conjecture \ref{Property B} it remains to verify the following statement: {\it if $x$ is algebraic, then its expansion with respect to $(\Gamma,\pi)$ is $p$-automatic, provided that $\pi$ and elements of $\Gamma$ are algebraic.} 

We do not prove this remaining part in present paper. But we will verify it under the condition that $\Gamma$ is additively closed. Recall that $\Gamma$ is additively closed if $\mu+\nu\in\Gamma$ for any $\mu,\nu\in\Gamma$. To proceed, we need a equivalent description for additively closed set. 

\begin{lem}\label{add-cls set}
A set $S\subset K_v$ is additively closed if and only if $S$ is an $\F_p$-linear space. 
\end{lem}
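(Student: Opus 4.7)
The ``if'' direction is immediate: every $\F_p$-linear subspace of $K_v$ is, by definition, closed under addition (in particular $\mu+\nu$ lies in the space for all $\mu,\nu$ in it). So the plan is to focus on the ``only if'' direction.

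Assume $S$ is additively closed; one should implicitly take $S$ non-empty, since the empty set is vacuously additively closed but is usually not counted as an $\F_p$-linear space (in the intended application $S=\Gamma$ is always non-empty). The plan is to derive the three remaining axioms of an $\F_p$-linear space from additive closure alone, exploiting the characteristic $p$ assumption. First, pick any $\mu\in S$. By induction on $k\ge 1$, closure under addition gives $k\mu = \mu+\mu+\cdots+\mu\in S$. Since $K_v$ has characteristic $p$, taking $k=p$ yields $p\mu = 0\in S$, so $0\in S$. Taking $k=p-1$ yields $(p-1)\mu = -\mu\in S$, so $S$ is closed under negation. Finally, for any $c\in\F_p$, represented by the integer $0\le c<p$, the element $c\mu$ is either $0$ (already in $S$) or the $c$-fold sum $\mu+\cdots+\mu\in S$; thus $S$ is closed under scalar multiplication by $\F_p$. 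Together with the assumed closure under addition, this shows $S$ is an $\F_p$-subspace of $K_v$.

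There is essentially no obstacle here; the statement is a purely formal consequence of the fact that in characteristic $p$ the prime field $\F_p$ is generated (as a ring) by $1$, so any subset of a characteristic-$p$ field closed under addition is automatically closed under the $\F_p$-action. The only mild subtlety, worth flagging, is the non-emptiness convention mentioned above.
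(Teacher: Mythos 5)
Your proof is correct and follows essentially the same route as the paper, which simply remarks that an additively closed subset of $K_v$ is ``easy to see'' to be closed under the $\F_p$-action; you have merely spelled out the repeated-addition argument ($p\mu=0$, $(p-1)\mu=-\mu$, $c\mu$ as a $c$-fold sum) that the paper leaves implicit. Your remark about excluding the empty set is a reasonable point of care that the paper glosses over.
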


\begin{proof}
If $S$ is an $\F_p$-linear space, then $S$ is obviously additively closed. On the other hand, $K_v$ is an $\F_p$-linear space. If $S$ is an additively closed subset of $K_v$, then it is easy to see $S$ is closed for addition and scalar multiplication, hence is a $\F_p$-linear subspace of $K_v$. 
\end{proof} 

Let $L/K$ be a finite extension of global function fields of characteristic $p$. Let $w$ be any valuation of $L$ which extends the valuation $v$ of $K$, that is $w|_K=v$. Let $L_w$ be the completion of $L$ under $w$. Denote by $L^a$ the algebraic closure of $L$ in $L_w$ and $K^a$ the algebraic closure of $K$ in $K_v$. Obviously $L^a$ is also the algebraic closure of $K$ in $L_w$. Hence $K^a$ is a subfield of $L^a$. In the process of proving the additively closed case of Conjecture \ref{Property B}, we come across the following result. 

\begin{thm}\label{ext deg of alg cls}
We have $[L^a:K^a]=[L_w:K_v]$. 
\end{thm}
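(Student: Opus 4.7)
The plan is to reduce to the case of a simple extension $L=K(\alpha)$ and then to identify $L^a$ with $K^a(\alpha)$ by exploiting the fact that $K^a$ is henselian. First, by a straightforward induction on $[L:K]$ along a tower of intermediate fields, using multiplicativity of degrees both for $[L^a:K^a]$ and for $[L_w:K_v]$ (with the completion of each intermediate field under the restricted valuation sitting between $K_v$ and $L_w$), the problem reduces to $L=K(\alpha)$. So I assume henceforth $L=K(\alpha)$.

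In this case, $L_w=K_v(\alpha)$, because $K_v(\alpha)$ is a finite (hence complete) extension of $K_v$ containing $L$. The crucial point is that $K^a$ is henselian: given $p(x)\in A_v\cap K^a[x]$ with a simple root modulo the maximal ideal, Hensel's lemma in $K_v$ lifts it to a root $\beta\in A_v$, and this $\beta$ is algebraic over $K^a$, hence over $K$, so $\beta\in K^a$. Henselianity of $K^a$, combined with defectlessness of the local extensions of $K_v$ (the residue field $\F_q$ is perfect), forces $\min_{K^a}(\alpha)$ to remain irreducible over $K_v$: via the decomposition $K^a(\alpha)\otimes_{K^a}K_v\cong\prod_i K_v[x]/(f_i)$ indexing the extensions of $v$ to $K^a(\alpha)$, only a single factor appears, with exponent one. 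Thus $\min_{K^a}(\alpha)=\min_{K_v}(\alpha)$, giving $[K^a(\alpha):K^a]=[K_v(\alpha):K_v]=[L_w:K_v]$.

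It remains to show $L^a=K^a(\alpha)$. The inclusion $K^a(\alpha)\subseteq L^a$ is immediate. For the reverse, take $\beta\in L^a$ and note that $K^a(\alpha)$ is henselian (being algebraic over the henselian $K^a$) with completion $L_w$ (its closure in $L_w$ contains the dense subfield $L=K(\alpha)$). Re-running the minimal-polynomial argument with base $K^a(\alpha)$ yields $[K^a(\alpha)(\beta):K^a(\alpha)]=[L_w(\beta):L_w]=1$, since $\beta\in L_w$. Hence $\beta\in K^a(\alpha)$, giving $L^a=K^a(\alpha)$ and the theorem. The main obstacle is establishing henselianity of $K^a$ and the resulting irreducibility of $\min_{K^a}(\alpha)$ over $K_v$; the defining property of $K^a$ (any element of $K_v$ algebraic over $K^a$ lies in $K^a$) is characteristic-free, so once defectlessness is in hand, inseparability in positive characteristic causes no additional difficulty.
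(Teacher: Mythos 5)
Your reduction to simple extensions, the identification $L_w=K_v(\alpha)$, and your proof that $K^a$ is henselian are all correct, and the overall strategy (valuation-theoretic, via henselianity) is genuinely different from the paper's. The gap is in your ``crucial point''. The isomorphism $K^a(\alpha)\otimes_{K^a}K_v\cong\prod_i K_v[x]/(f_i)$ with every factor appearing ``with exponent one'', interpreted as indexing the extensions of $v$, is a theorem only when $\alpha$ is \emph{separable} over $K^a$; when $\alpha$ is inseparable the tensor product may contain nilpotents, and ruling out a repeated factor is exactly the assertion $[K^a(\alpha):K^a]=[K_v(\alpha):K_v]$ you are trying to prove. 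The ``defectlessness'' you cite is a property of the complete field $K_v$ and does not transfer to $K^a$: since $K^a(\alpha)$ is dense in $K_v(\alpha)$ and completions are immediate extensions, one has $[K^a(\alpha):K^a]=d\cdot[K_v(\alpha):K_v]$ where $d$ is the defect of $K^a(\alpha)/K^a$, so ``$K^a$ is defectless'' is a restatement of the theorem, not an available input. Moreover, the implication you want is genuinely false for fields having only the properties your argument invokes. Take $u\in\F_p[[t]]$ transcendental over $\F_p(t)$ and let $F$ be the separable algebraic closure of $\F_p(t,u^p)$ inside $\F_p((t))$. Then $F$ is henselian, discretely valued with perfect residue field $\F_p$, and dense in its completion $\F_p((t))$, which is defectless; yet $x^p-u^p$ is irreducible over $F$ and factors as $(x-u)^p$ over $\F_p((t))$. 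What saves $K^a$ (and fails for this $F$) is its relative algebraic closedness in $K_v$, which gives $K^a\cap K_v^p=(K^a)^p$; but you use that property only to establish henselianity, never in the irreducibility step, and exploiting it requires an actual treatment of inseparable minimal polynomials (e.g.\ via the lemma that, for $g$ irreducible over a field $E$ of characteristic $p$, the polynomial $g(x^p)$ is irreducible unless all coefficients of $g$ lie in $E^p$).

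The same problem recurs, in aggravated form, in your last step: to re-run the argument over $F'=K^a(\alpha)$ you need, for $\beta$ inseparable over $F'$, that $F'\cap L_w^p=(F')^p$, i.e.\ essentially that $F'$ is relatively algebraically closed in $L_w$ --- but that is precisely the conclusion $L^a=K^a(\alpha)$ being proved; henselianity of $F'$, which as you note does pass to algebraic extensions, covers only separable $\beta$. The paper circumvents both difficulties. In the separable case it avoids henselianity altogether by observing that the minimal polynomial over $K_v$ of any $\beta\in L^a$ has coefficients which are symmetric functions of elements algebraic over $K$, hence lie in $K^a$, and it bounds $[L^a:K^a]$ using the primitive element theorem. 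The purely inseparable degree-$p$ steps are handled by an entirely different, explicit argument: writing $L_w=\F_s((\lambda))$ and $K_v=\F_s((\lambda^p))$, Christol's theorem identifies $L^a$ and $K^a$ as the Laurent series with $p$-automatic coefficient sequences, and extraction of arithmetic-progression subsequences shows that $1,\lambda,\dots,\lambda^{p-1}$ is a $K^a$-basis of $L^a$. That lemma performs exactly the work that your appeal to defectlessness skips.
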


Before delivering the proof of Theorem \ref{ext deg of alg cls}, we show how it is used to prove the additively closed case of Conjecture \ref{Property B}.

\begin{thm}\label{add-cls case of B}.
    Let $\Gamma$ be additively closed. Then $(\Gamma,\pi)$ has property B if and only if $\pi$ and elements of $\Gamma$ are algebraic. 
\end{thm}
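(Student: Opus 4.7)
The plan is to handle the two directions separately. The ``only if'' direction is Lemma \ref{necessity of B}, and Lemma \ref{atm to alg} reduces ``if'' to the following statement: whenever $x\in K_v$ is algebraic over $K$, the coefficient sequence $\{a_n\}$ of its $(\Gamma,\pi)$-expansion is $p$-automatic.

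First, I would linearize the coefficients by exploiting that $\Gamma$ is additively closed. Writing $q=p^{f_0}$, I note that $[K_v:\F_p((\pi))]=eff_0=:s$. By Lemma \ref{add-cls set}, $\Gamma$ is an $\F_p$-subspace of $K_v$, and the reduction map $\Gamma\to A_v/\pi A_v$ is an $\F_p$-linear bijection, so $\dim_{\F_p}\Gamma=s$. Fix an $\F_p$-basis $\beta_1,\dots,\beta_s$ of $\Gamma$. A short Nakayama-type argument, using that the reductions $\bar\beta_i$ form an $\F_p$-basis of $A_v/\pi A_v$, yields that $\beta_1,\dots,\beta_s$ are $\F_p((\pi))$-linearly independent, and a dimension count promotes them to an $\F_p((\pi))$-basis of $K_v$. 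Expanding $a_n=\sum_i c_n^{(i)}\beta_i$ with $c_n^{(i)}\in\F_p$ and putting $h_i(\pi):=\sum_{n\ge m}c_n^{(i)}\pi^n\in\F_p((\pi))$, one has $x=\sum_i\beta_ih_i(\pi)$, and $\{a_n\}$ is $p$-automatic iff every $\{c_n^{(i)}\}$ is (automatic sequences are closed under direct products and letter-to-letter codings). By the classical Christol theorem applied inside $\F_p((\pi))$, it then suffices to show that each $h_i$ is algebraic over $\F_p(\pi)$.

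The crux will be a careful application of Theorem \ref{ext deg of alg cls} with the small base $\F_p(\pi)$, not $K$. Take $L:=\F_p(\pi,\beta_1,\dots,\beta_s,x)$, which is finite over $\F_p(\pi)$ and hence a global function field, and let $w$ be the valuation on $L$ induced by the inclusion $L\hookrightarrow K_v$. Since $\beta_1,\dots,\beta_s\in L_w$ are $\F_p((\pi))$-linearly independent while $L_w\subseteq K_v$ has $\F_p((\pi))$-dimension at most $s$, we conclude $L_w=K_v$. Theorem \ref{ext deg of alg cls} then gives $[L^a:(\F_p(\pi))^a]=s$, where the superscripts denote the algebraic closures inside $L_w$ and $\F_p((\pi))$ respectively. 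To conclude, $\beta_1,\dots,\beta_s\in L^a$ are $(\F_p(\pi))^a$-linearly independent and thus form an $(\F_p(\pi))^a$-basis of $L^a$; since $x\in L\subseteq L^a$, we may write $x=\sum_i e_i\beta_i$ with $e_i\in(\F_p(\pi))^a$, and uniqueness of the $\F_p((\pi))$-basis expansion forces $h_i=e_i$, which is algebraic over $\F_p(\pi)$, finishing the proof.

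The main obstacle is conceptual rather than computational: Theorem \ref{ext deg of alg cls} has to be applied with base $\F_p(\pi)$ so that $(\F_p(\pi))^a$ coincides exactly with the field of algebraic series targeted by Christol; taking the larger base $K$ collapses the theorem to the trivial $L_w=K_v$ without isolating the right subfield. Along the way one has to verify that $L/\F_p(\pi)$ is finite even when $\pi\notin K$, which rests on $K(\pi)$ having transcendence degree one over $\F_p$ (so that $K(\pi)/\F_p(\pi)$ is algebraic), and that the reduction-map bijection is $\F_p$-linear; both of these are routine.
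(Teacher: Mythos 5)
Your proposal is correct and follows essentially the same route as the paper's proof: fix an $\F_p$-basis of $\Gamma$ (via Lemma \ref{add-cls set}), show it is an $\F_p((\pi))$-basis of $K_v$, apply Theorem \ref{ext deg of alg cls} over the small base $\F_p(\pi)$ to conclude that this basis is also an $(\F_p(\pi))^a$-basis of the field of algebraic elements, and finish by applying Christol's theorem to the coordinates. The only difference is bookkeeping: the paper applies Theorem \ref{ext deg of alg cls} directly to the extension $K/\F_p(\pi)$ (tacitly treating $\pi$ as lying in $K$), whereas you adjoin $\pi$, the basis elements and $x$ to form $L$ and verify $L_w=K_v$, which handles the case $\pi\notin K$ a bit more carefully but is otherwise the same argument.
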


\begin{proof}
By the discussion right below Lemma \ref{atm to alg}, we need only to show that each algebraic element $x$ has $p$-automatic expansion with respect to $(\Gamma,\pi)$, provided that $\pi$ and elements of $\Gamma$ are algebraic. We do this in the following. 

Since $\Gamma$ is additively closed, by Lemma \ref{add-cls set}, $\Gamma$ is an $\F_p$-linear space. Let $u:=\log_p|\Gamma|$, and $\alpha_1,\ldots,\alpha_u$ consist of an $\F_p$-basis of $\Gamma$. Then the reductions of  $\alpha_1,\ldots,\alpha_u$ modulo $\pi$ consist of an $\F_p$-basis of $A/\pi A$. It follows that $\alpha_1,\ldots,\alpha_u$ are linearly independent over $\F_p((\pi))$, and hence are linearly independent over $\F_p(\pi)^a$, where $\F_p(\pi)^a$ denotes the algebraic closure of $\F_p(\pi)$ in $\F_p((\pi))$. Applying Theorem \ref{ext deg of alg cls} to the extension $K/\F_p(\pi)$, we deduce that 
$$u=\log_p q^{ef}=[K_v:\F_q((\pi))][\F_q((\pi)):\F_p((\pi))]=[K_v:\F_p((\pi))]=[K^a:\F_p(\pi)^a].$$ 
It follows that $\alpha_1,\ldots,\alpha_u$ consist of an $\F_p(\pi)^a$-basis of $K^a$. Hence for $x\in K^a$ there exist uniquely determined $x_1,\ldots,x_u\in \F_p(\pi)^a$ such that 
$$x=x_1\alpha_1+\cdots+x_u\alpha_u.$$
For $1\le i\le u$, write each $x_i$ as 
$$x_i=\sum_{n\ge m}a_n^{(i)}\pi^n, a_n^{(i)}\in \F_p\ {\rm for\ all\ } n\ge m.$$ 
By Christol's theorem, each sequence $\{a_n^{(i)}\}_{n\ge m}$ is $p$-automatic. Letting $a_n=\sum_{i=1}^u a_n^{(i)}\alpha_i$, we have  
$$x=\sum_{i=1}^u \alpha_i \sum_{n\ge m}a_n^{(i)}x^n=\sum_{n\ge m}\pi^n\sum_{i=1}^u a_n^{(i)}\alpha_i
=\sum_{n\ge m}a_n\pi^n.$$ 
It is easy to see $a_n\in\Gamma$ for all $\Gamma$, therefore $\sum_{n\ge m}a_n\pi^n$ is the expansion of $x$ with respect to $(\Gamma,\pi)$. Since $\{a_n^{(i)}\}_{n\ge m}$ are $p$-automatic for $1\le i\le u$, we deduce that $\{a_n\}_{n\ge m}$ is $p$-automatic. This finishes the proof of Theorem \ref{add-cls case of B}.
\end{proof} 

Now let us get back to the proof of Theorem \ref{ext deg of alg cls}. We show in advance two lemmas which are special cases of Theorem \ref{ext deg of alg cls}. 
\begin{lem}\label{separable case}
If $L/K$ is finite separable. Then $[L^a:K^a]=[L_w:K_v]$.
\end{lem}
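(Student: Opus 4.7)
The plan is to use the primitive element theorem and to identify $L^a$ explicitly as $K^a(\alpha)$ for a suitable primitive element $\alpha$ of $L/K$. Since $L/K$ is finite separable, I would write $L = K(\alpha)$ and observe that $L_w = K_v(\alpha)$, because $L_w$ is a finite extension of the complete field $K_v$ containing $L$; in particular $[L_w:K_v] = \deg g$, where $g \in K_v[X]$ is the minimal polynomial of $\alpha$ over $K_v$. The polynomial $g$ divides the minimal polynomial of $\alpha$ over $K$, so all its roots in a fixed algebraic closure $\overline{K_v} \supseteq L_w$ are algebraic over $K$.

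Next I would check that the coefficients of $g$ already lie in $K^a$ and that $g$ remains irreducible over $K^a$. Its coefficients lie in $K_v$ by construction and equal elementary symmetric functions of roots algebraic over $K$, so they lie in $K^a = K_v \cap \overline{K}$. Any nontrivial factorization of $g$ over $K^a$ would persist over the larger field $K_v$, contradicting irreducibility over $K_v$; thus $[K^a(\alpha):K^a] = \deg g = [L_w:K_v]$. The inclusion $K^a(\alpha) \subseteq L^a$ is immediate since $K^a \subseteq L^a$ and $\alpha \in L \subseteq L^a$.

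The core step is the reverse inclusion $L^a \subseteq K^a(\alpha)$. Given $\beta \in L^a \subseteq L_w = K_v(\alpha)$, I would write $\beta = \sum_{i=0}^{d-1} c_i \alpha^i$ with $c_i \in K_v$ and $d = [L_w:K_v]$. Let $\alpha = \alpha_1,\ldots,\alpha_d$ be the roots of $g$ in $\overline{K_v}$, and let $\sigma_j$ be the $K_v$-embedding $L_w \hookrightarrow \overline{K_v}$ sending $\alpha \mapsto \alpha_j$; then $\sigma_j(\beta) = \sum_i c_i \alpha_j^i$. Since each $\sigma_j$ fixes $K$ and $\beta$ is algebraic over $K$, each $\sigma_j(\beta)$ is algebraic over $K$, and so are the $\alpha_j$. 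Separability of $g$ makes the Vandermonde matrix $(\alpha_j^i)$ invertible, so Cramer's rule expresses each $c_i$ as a polynomial expression in quantities algebraic over $K$; together with $c_i \in K_v$, this forces $c_i \in K^a$. Thus $\beta \in K^a(\alpha)$, yielding $L^a = K^a(\alpha)$ and hence $[L^a:K^a] = [L_w:K_v]$.

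The main obstacle is the Vandermonde step: one must verify both that $K_v$-conjugates of an element algebraic over $K$ remain algebraic over $K$, and that the linear system solving for the $c_i$ is nonsingular. Both facts depend on separability of $L/K$ in an essential way, which is precisely why this argument handles only the separable case and why the general version of Theorem \ref{ext deg of alg cls} will presumably need a separate reduction to treat the inseparable part.
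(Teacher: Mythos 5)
Your proof is correct, and it starts from the same skeleton as the paper's: write $L=K(\alpha)$ by the primitive element theorem, identify $L_w=K_v(\alpha)$, and use the symmetric-function observation that the minimal polynomial $g$ of $\alpha$ over $K_v$ has coefficients algebraic over $K$, hence in $K^a$, and stays irreducible there, giving $[K^a(\alpha):K^a]=\deg g=[L_w:K_v]$. Where you genuinely diverge is in the decisive step, namely showing $[L^a:K^a]\le [L_w:K_v]$. The paper argues indirectly: it first establishes that $L^a/K^a$ is separable (for every $\beta\in L^a$ the minimal polynomial over $K_v$ has coefficients in $K^a$, so it coincides with the minimal polynomial over $K^a$, and $L_w/K_v$ is separable), and then derives a contradiction — if $[L^a:K^a]>[K^a(\alpha):K^a]$, separability would produce $\beta\in L^a$ whose minimal polynomial over $K^a$, being also its minimal polynomial over $K_v$, would have degree exceeding $[L_w:K_v]$. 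You instead prove the stronger, explicit statement $L^a=K^a(\alpha)$: expand $\beta\in L^a$ in the $K_v$-basis $1,\alpha,\dots,\alpha^{d-1}$, apply the $d$ distinct $K_v$-embeddings into $\overline{K_v}$ (distinct because $g$ is separable), and invert the Vandermonde matrix to conclude that the coordinates $c_i$ are algebraic over $K$ and lie in $K_v$, hence in $K^a$. Your route buys an explicit description of $L^a$ as $K^a(\alpha)$ and entirely avoids discussing separability of the (possibly infinite) extension $L^a/K^a$, while also making rigorous the "coefficients in $K^a$" claim that the paper labels obvious; the paper's route avoids the linear algebra and is shorter, at the cost of an indirect contradiction argument. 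Both arguments are sound, and both correctly isolate separability (distinctness of the roots of $g$) as the ingredient that fails in the purely inseparable case handled by the paper's Lemma \ref{inseparable case}.
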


\begin{proof}
Since $L/K$ is finite separate, there exists a primitive element $\alpha$ such that $L=K(\alpha)$. Furthermore, we see $L_w=K_v(\alpha)$ since both fields are complete, and $L\subset K_v(\alpha)\subset L_w$. As $\alpha$ is a separable element, we have $L_w/K_v$ is separable. For any $\beta\in L^a$, obviously the minimal polynomial of $\beta$ over $K_v$ has all its coefficients from $K^a$, hence is also the minimal polynomial of $\beta$ over $K^a$. So $\beta$ is separable over $K^a$ as it is separable over $K_v$. As a consequence, $L^a/K^a$ is separable. In addition the coincidence of the minimal polynomials of $\alpha$ over both $K_v$ and $K^a$ gives us $[L_w:K_v]=[K^a(\alpha):K^a]$. Hence to show $[L^a:K^a]=[L_w:K_v]$ is equivalent to show $[L^a:K^a]=[K^a(\alpha):K^a]$. Assume to the contrary that $[L^a:K^a]$ is strictly larger than $[K^a(\alpha):K]$. Then by $L^a/K^a$ is separable, there exists some $\beta\in L^a$ such that its minimal polynomial $g(x)$ over $K^a$ is of degree greater than $[K^a(\alpha):K^a]$. But $g(x)$ is also the minimal polynomial of $\beta$ over $K_v$, and hence is of degree no more than $[L_w:K_v]=[K^a(\alpha):K^a]$, a contradiction. Therefore we have $[L^a:K^a]=[L_w:K_v]$. 
\end{proof}

\begin{lem}\label{inseparable case}
    If $L/K$ is purely inseparable of degree $p$, then $[L^a:K^a]=[L_w:K_v]=p$. 
\end{lem}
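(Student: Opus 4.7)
The plan is to establish $[L_w:K_v] = p$ first and then bootstrap to $[L^a:K^a] = p$. Write $L = K(\alpha)$ with $\alpha^p = a \in K \setminus K^p$, and fix $\pi_0 \in K$ with $v(\pi_0) = 1$ (such a $\pi_0$ exists since $v$ is normalized). The two basic inputs are $[K:K^p] = p$, because $K$ is a function field of transcendence degree one over the perfect constant field $\F_q$, and $[K_v:K_v^p] = p$, because $K_v$ is a Laurent series field over a perfect residue field, with $\{1,\pi_0,\dots,\pi_0^{p-1}\}$ a $K_v^p$-basis of $K_v$.

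For the first equality, I would observe that $K^p \subseteq K \cap K_v^p \subseteq K$, and since $[K:K^p] = p$ is prime, this intersection equals either $K^p$ or $K$. It cannot equal $K$, because $\pi_0 \in K$ fails to lie in $K_v^p$: every $p$-th power in $K_v$ has valuation divisible by $p$, whereas $v(\pi_0) = 1$. Hence $K \cap K_v^p = K^p$, which forces $a \notin K_v^p$ and hence $\alpha \notin K_v$; consequently $L_w = K_v(\alpha)$ has degree $p$ over $K_v$.

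For the second equality, I would first show $(L^a)^p \subseteq K^a$. Any $\beta \in L_w$ expands in the $K_v$-basis $\{1,\alpha,\dots,\alpha^{p-1}\}$ as $\beta = \sum_{i=0}^{p-1} \beta_i \alpha^i$ with $\beta_i \in K_v$, giving $\beta^p = \sum_i \beta_i^p a^i \in K_v$; if moreover $\beta \in L^a$ is algebraic over $K$, then so is $\beta^p$, whence $\beta^p \in K^a$. Thus $L^a/K^a$ is purely inseparable of exponent $\leq 1$, so $[L^a:K^a] \leq [K^a:(K^a)^p]$. I then claim $[K^a:(K^a)^p] = p$: the lower bound is $\pi_0 \in K^a \setminus (K^a)^p$ by the valuation obstruction, and for the upper bound any $\beta \in K^a$ decomposes in $K_v$ as $\beta = \pi_0^k \gamma^p$ with $0 \leq k < p$ and $\gamma \in K_v$, and then $\gamma^p = \beta/\pi_0^k \in K^a$ makes $\gamma$ algebraic over $K$, so $\gamma \in K^a$ (as $K^a$ is algebraically closed in $K_v$). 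Finally, $\alpha \notin K_v \supseteq K^a$ together with $\alpha^p = a \in K^a$ gives $[K^a(\alpha):K^a] = p \leq [L^a:K^a]$, so $[L^a:K^a] = p$.

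The main subtlety is the upper bound $[K^a:(K^a)^p] \leq p$, which relies on the fact that $p$-th roots in $K_v$ of elements algebraic over $K$ remain algebraic over $K$; this is immediate from the definition of $K^a$ as the algebraic closure of $K$ in $K_v$. The remaining ingredients are dimensional bookkeeping and the valuation-divisibility obstruction for $p$-th powers.
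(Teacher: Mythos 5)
Most of your argument is sound: the identification $K\cap K_v^p=K^p$ via the prime degree $[K:K^p]=p$ and the valuation obstruction, the resulting conclusion $[L_w:K_v]=[K_v(\alpha):K_v]=p$, the inclusion $(L^a)^p\subseteq K^a$ giving $[L^a:K^a]\le[K^a:(K^a)^p]$, and the lower bound $[L^a:K^a]\ge[K^a(\alpha):K^a]=p$ are all correct. The genuine gap is in your upper bound $[K^a:(K^a)^p]\le p$: the claim that every $\beta\in K^a$ can be written \emph{multiplicatively} as $\beta=\pi_0^k\gamma^p$ with $0\le k<p$ and $\gamma\in K_v$ is false. Take $\beta=1+\pi_0\in K\subseteq K^a$. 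Comparing valuations forces $k=0$, so $1+\pi_0=\gamma^p$, i.e.\ $\pi_0=\gamma^p-1=(\gamma-1)^p$; but then $1=v(\pi_0)=p\,v(\gamma-1)$, which is impossible. What is true is the \emph{additive} decomposition $\beta=\sum_{i=0}^{p-1}\pi_0^i\gamma_i^p$ with $\gamma_i\in K_v$ (this is exactly what your basis $\{1,\pi_0,\dots,\pi_0^{p-1}\}$ of $K_v$ over $K_v^p$ provides), and then the statement you would need --- that each component $\gamma_i$ is again algebraic over $K$ whenever $\beta$ is --- is precisely the nontrivial content of the lemma. This is the point at which the paper invokes Christol's theorem: it writes $L_w=\F_s((\lambda))$ and $K_v=L_w^p=\F_s((\lambda^p))$, identifies $L^a$ and $K^a$ with the Laurent series whose coefficient sequences are $p$-automatic, and splits a series into its residue classes of exponents modulo $p$, using that arithmetic-progression subsequences of automatic sequences are automatic.

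The gap is repairable without automatic sequences, but by a different argument from the one you give. Any $p+1$ elements $\beta_0,\dots,\beta_p\in K^a$ generate a finite extension $F=K(\beta_0,\dots,\beta_p)$ of $K$ inside $K_v$; since $F$ is again a global function field, $[F:F^p]=p$, so $\beta_0,\dots,\beta_p$ are linearly dependent over $F^p\subseteq(K^a)^p$. This yields $[K^a:(K^a)^p]\le p$ directly, and with that substitution your proof closes and becomes genuinely more elementary than the paper's, which rests on Christol's theorem. But as written, the multiplicative-decomposition step fails, and it is exactly the step carrying the real content of $[L^a:K^a]\le p$.
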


\begin{proof}
If $L/K$ is purely inseparable of degree $p$, then $L^p=K$. Taking completions on both side gives us $L_w^p=K_v$. Let $F_s$ be the algebraic closure of $\F_p$ in $L_w$. Then $\F_s$ is isomorphic to the residue field of $L_w$. Taking any uniformizer $\lambda$ of $L_w$, we have $L_w=\F_s((\lambda))$ (cf. \cite[Theorem 5.10]{Wi}). Then $K_v=\F_s((\lambda^p))$. Now by Christol's theorem, $L^a$ is the collection of Laurent series $\sum_{n\ge m}a_n \lambda^n$ with $p$-automatic sequences of coefficients, while $K^a$ is the collection of Laurent series  $\sum_{n\ge m}a_n \lambda^{pn}$ with $p$-automatic sequences of coefficients. Write 
$$\sum_{n\ge m}a_n \lambda^n=\sum_{0\le i\leq p-1}\lambda^i\sum_{n\ge m, n\equiv i\mod p}a_n\lambda^{pn}.$$ 
By \cite[Theorem 6.8.1]{AS}, the subsequences $\{a_n: n\ge m, n\equiv i\mod p\}$ are automatic, hence by Christol' theorem $\sum_{n\ge m, n\equiv i\mod p}a_n\lambda^{pn}$ are elements of $K^a$ for $0\le i\le p-1$. So we drive that $1,\lambda,\dots,\lambda^{p-1}$ is an $K^a$-basis of $L^a$, which induces $[L^a:K^a]=[L_w:K_v]=p$. 
\end{proof}

Now we prove Theorem \ref{ext deg of alg cls}. 

{\it Proof of Theorem \ref{ext deg of alg cls}.}
Let $L_0$ be the separable closure of $K$ in $L$. Then $L/L_0$ is purely inseparable. There exists a chain of proper extensions 
$$K\subset L_0\subset \cdots \subset L_n=L,$$ 
where $L_i/L_{i-1}$ are purely inseparable of degree $p$ for $1\le i\le n$. Denote by $\widehat{L_i}$ be the completion of $L_i$, and $L_i^a$ the algebraic closure of $L_i$ in $\widehat{L_i}$. Then we have chains of extensions  
$$K_v\subset \widehat{L_0}\subset \cdots \subset \widehat{L_n}=L_w$$ 
and
$$K^a\subset L_0^a\subset \cdots \subset L_n^a=L^a.$$
Applying Lemma \ref{separable case} and \ref{inseparable case} to the above two chains gives us the desired equation. \hfill$\Box$

\subsection*{Examples} The following is a concrete case for Theorem \ref{add-cls case of B}. Let $P(z)\in\F_q[z]$ be any irreducible polynomial of degree $d>0$. For all $Q(z)\in\F_q[z]$, there exists an unique nonnegative integer $k$ such that $P^k(z)$ divides $R(z)$ but $P^{k+1}(z)$ does not divide $R(z)$. We define $v_P(Q):=k$ and extend $v_P$ to the rational function field $\F_q(z)$ by 
$$v_P(Q/R):=v_p(Q)-v_P(R).$$ 
Then $v_P$ is a valuation on $\F_q(z)$. We call $v_P$ the $P$-adic valuation of $\F_q(z)$. The completion of $\F_q(\pi)$ under $v_P$ is called the $P$-adic completion of $\F_q(z)$. 
Take $\pi=P(x)$ and let $\Gamma$ be the collection of polynomials over $\F_q$ of degree less than $d$, i.e., 
$$\Gamma:=\{c_0+c_1z+\cdots+c_{d-1}z^{d-1}:c_0,\ldots,c_{d-1}\in\F_q \}.$$  
Then $\Gamma$ is a complete set of representatives of $A/\pi A$. Additionally $\Gamma$ is additively closed. For an element $x$ of the $P$-adic completion of $\F_q(z)$, let  
$$x=\sum_{n\ge m}a_n P^n$$ 
be the expansion of $x$ with respect to $(\Gamma,P(z))$. Then $x$ is algebraic over $\F_q(z)$ if and only if $\{a_n\}_{n\ge m}$ is $p$-automatic. Specially, take $z=P(z)$, then $\Gamma=\F_q$, we get the classical Christol's theorem. \\

By the following example we illustrate that Conjecture \ref{Property B} may be true when $\Gamma$ is not additively closed. Suppose $\pi$ and elements of $\Gamma$ are algebraic. Let $\xi$ be an algebraic element in $A_v$. If $\Gamma$ is additively closed, then $\Gamma+\xi=\{\gamma+\xi: \gamma\in \Gamma\}$ is a complete set of representatives of $A_v/\pi A_v$ that is not additively closed. For any $x\in K_v$, let $x=\sum_{n\ge m}b_n\pi^n$ be its expansion with respect to $(\Gamma+\xi, \pi)$. We write $a_n=b_n-\xi\in\Gamma$ for each $\ge m$. Then 
$$x=\sum_{n\ge m}(a_n+\xi)\pi^n=\sum_{n\ge m} a_n\pi^n+\frac{\pi^m\xi}{1-\pi}.$$ 
Hence $\sum_{n\ge m} a_n\pi^n$ is the expansion of $x-\pi^m\xi/(1-\pi)$ with respect to $(\Gamma,\pi)$. So $x$ is algebraic if and only if $x-\pi^m\xi/(1-\pi)$ is algebraic, if and only if $\{a_n\}_{n\ge m}$ is $p$-automatic by Theorem \ref{add-cls case of B}, if and only if $\{b_n\}_{n\ge m}$ is $p$-automatic. 

\subsection*{An equivalent statement of Conjecture \ref{Property B}} In the end of this section, we exhibit an  equivalent but more concrete statement of Conjecture \ref{Property B}. Let $\F_s$ be the algebraic closure of $\F_q$ in $K_v$. Then $\F_s$ is isomorphic to the residue field of $K_v$. For any uniformizer $z$ of $K_v$, we have $K_v=\F_s((z))$ and $A_v=\F_s[[z]]$. So to find a $\pi\in A_v$ is equivalent to find an power series $P(z)$ in $\F_s[[z]]$, and to find a complete set of representatives $\Gamma$ of $A_v/\pi A_v$ from $A_v$ is equivalent to find a complete set of representatives of $\F_s[[z]]/P(z)\F_s[[z]]$ from $\F_r[[z]]$. Thus we have the following equivalent statement of Conjecture \ref{Property B}. \\
\\
{\bf Conjecture 1.2$'$} {\it Let $P(z)\in \F_s[[z]]$, and $\Gamma\subset \F_s[[z]]$. Suppose $\Gamma$ is a complete set of representatives of $\F_s[[z]]/P(z)\F_s[[z]]$. Then $(\Gamma, P)$ has property B if and only if $P(z)$ and elements of $\Gamma$ are algebraic over $\F_s(z)$.} \\
\\
As evidence to the conjecture we prove that 
\begin{thm}\label{case q=2}
Let $\Gamma=\{f_1,f_2\}\subset \F_2[[z]]$ be a complete residue system of $\F_2[[z]]$ modulo $z$. Let $x=\sum_{n\ge m}a_nz^n$ be the expansion of $x$ with respect to $(\Gamma,z)$. Then $(\Gamma,z)$ has property B if and only if $f_1,f_2$ are algebraic. 
\end{thm}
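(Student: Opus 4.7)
The ``only if'' direction follows from Lemma \ref{necessity of B}, so I focus on sufficiency. Suppose $f_1,f_2\in\F_2[[z]]$ are algebraic over $\F_2(z)$. Since $\Gamma=\{f_1,f_2\}$ is a complete residue system modulo $z$ (in particular $f_1\neq f_2$), we have $f_1+f_2\neq 0$ in characteristic $2$.

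The plan is to encode the coefficient sequence by a single $\F_2$-valued indicator and reduce to the classical Christol theorem. For any $x\in\F_2((z))$ with expansion $x=\sum_{n\ge m}a_nz^n$ relative to $(\Gamma,z)$, I would define $c_n\in\F_2$ by $c_n=0$ if $a_n=f_1$ and $c_n=1$ if $a_n=f_2$. Working in characteristic $2$, the identity $a_n=f_1+c_n(f_1+f_2)$ holds in either case, and summing yields
$$x\;=\;f_1\cdot\frac{z^m}{1-z}\;+\;(f_1+f_2)\,y, \qquad y:=\sum_{n\ge m}c_nz^n\in\F_2((z)).$$
Because $f_1,f_2$ and $z^m/(1-z)$ are algebraic over $\F_2(z)$ and $f_1+f_2\neq 0$, solving for $y$ shows that $x$ is algebraic over $\F_2(z)$ if and only if $y$ is.

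Next, I would apply Fact B (the classical Christol theorem) to $y\in\F_2((z))$: $y$ is algebraic over $\F_2(z)$ if and only if $\{c_n\}_{n\ge m}$ is $2$-automatic. Finally, since the map $f_1\mapsto 0$, $f_2\mapsto 1$ is a bijection $\Gamma\to\F_2$, the $2$-kernels of $\{a_n\}$ and $\{c_n\}$ are in bijection, so $\{a_n\}$ is $2$-automatic precisely when $\{c_n\}$ is. Chaining these three equivalences yields property B.

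I do not foresee a substantive obstacle. The argument relies on exactly two features of the hypothesis $q=2$: characteristic $2$ justifies the linearization $a_n=f_1+c_n(f_1+f_2)$, and $|\Gamma|=2$ lets a single $\F_2$-valued sequence $\{c_n\}$ record the data of $\{a_n\}$. What the argument does not extend to is the case of three or more representatives with a non-additively-closed $\Gamma$, since one can no longer encode $\Gamma$ by a single $\F_p$-valued sequence; this is presumably where the genuine difficulty in Conjecture \ref{Property B} resides.
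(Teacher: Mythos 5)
Your proof is correct and takes essentially the same route as the paper: the paper encodes $a_n$ by its constant term $a_n^{(0)}\in\F_2$ (so that $a_n=a_n^{(0)}f_1+(1-a_n^{(0)})f_2$, i.e.\ your identity with $c_n=1-a_n^{(0)}$), sums to get $x=(f_1-f_2)\sum_{n\ge m}a_n^{(0)}z^n+f_2\sum_{n\ge m}z^n$, and then invokes classical Christol exactly as you do. The only cosmetic difference is that you run the whole chain as equivalences (handling both directions of property B at once), whereas the paper proves ``algebraic $\Rightarrow$ automatic'' this way and delegates the converse to Lemma~\ref{atm to alg}.
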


\begin{proof} 
As it is similar to the proof of Theorem \ref{add-cls case of B}, we need only show that if $f_1$ and $f_2$ are algebraic, then the algebraicity of $x$ implies the 2-automaticity of $\{a_n\}_{n\ge m}$.  

Assume $f_1,f_2$ are algebraic. Suppose the constant terms of $f_1$ and $f_2$ are $1$ and $0$ respectively. Denote by $a_n^{(0)}$ the constant term of $a_n$. Since $a_n\in\{f_1,f_2\}$, we have $a_n=f_1$ if $a_n^{(0)}=1$, and $a_n=f_2$ if $a_n^{(0)}=0$. Then we deduce that  
$$a_n=a_n^{(0)}f_1+(1-a_n^{(0)})f_2.$$
Thus 
$$x=\sum_{n\ge m}a_nz^n=\sum_{n\ge m}(a_n^{(0)}f_1+(1-a_n^{(0)})f_2)z^n
=(f_1-f_2)\sum_{n\ge m}a_n^{(0)}z^n+f_2\sum_{n\ge m}z^n.$$
Note that $x,f_1,f_2$ are algebraic, it follows that $\sum_{n\ge m}a_n^{(0)}z^n$ is also algebraic. By Christol's Theorem, the sequence $\{a_n^{(0)}\}$ is $2$-automatic. Therefore $\{a_n\}_{n\ge 0}$ is $2$-automatic. This finishes the proof of Theorem \ref{case q=2}. 
\end{proof}

\section{$\beta$-expansions of formal Laurent series}
In this section, we apply Theorem \ref{add-cls case of B} to study $\beta$-expansions of formal Laurent series. Let $\F_q((1/z))$ be the field of formal Laurent series of the form 
$$x=\sum_{n\ge -m}\frac{a_n}{z^n}, a_n\in\F_q,$$ 
where $m\in\mathbb{Z}$ is called the degree of $x$. We say a formal Laurent series is algebraic if it is algebraic over $\F_q(1/z)$. The set of Laurent series of degree no more than zero is $\F_q[[1/z]]$. The integer part $[x]$ and the fraction part $\{x\}$ of $x$ are respectively defined as 
$$[x]:=\sum_{-m\le n\le 0}\frac{a_n}{z^n}~~ {\rm and~~ } \{x\}:=\sum_{n>0}\frac{a_n}{z^n}.$$ 
Let $\beta$ be a series in $\F_q((1/z))$ of positive degree. Define 
$$T(x):=\beta x-[\beta x], \forall x\in\F_q[[1/z]].$$ 
Denote by $T^n$ the $n$-th iteration of $T$ for positive integers $n$, and set $T^0(x)=x$ for all $x\in\F_q[[1/z]]$. Then we have the following expansion 
$$x=\frac{a_1}{\beta}+\frac{a_2}{\beta^2}+\cdots,$$ 
where $a_n=[\beta T^nx]$ are elements of $\F_q[z]$. The $\beta$-expansion of $x$, denoted by $d_{\beta}(x)$, is defined to be the sequence $\{a_n\}_{n\ge 1}$. It is an analogue of $\beta$-expansions of real numbers introduced by R\'enyi \cite{Re}. Many properties about $\beta$-expansions are studied in \cite{HM,Sch,SS}, we concentrate on those related to automatic sequences. Hbaib and Mkaouar \cite[Theorem 5.4]{HM} proved that if $d_{\beta}(1)$ is automatic, then $\beta$ is algebraic. Assuming $\deg \beta=1$, they \cite[Theorem 5.6]{HM} further proved that $d_{\beta}(1)$ is $p$-automatic if and only if $\beta$ is algebraic. Scheicher and Sirvent \cite[Theorem 6.1]{SS} showed that if $\beta$ is isolated of positive degree, then $d_{\beta}(1)$ is $p$-automatic. They \cite[Theorem 6.6]{SS} also showed that if $\beta$ is a Pisot or Salem series, then $d_{\beta}(\alpha)$ is $p$-automatic if and only if $\alpha$ is algebraic. Based on above results, the following questions on $\beta$-expansions appear in \cite{HM} and \cite{SS}. 

Question 1 \cite[Remark 6.7]{SS}: Is either the converse of \cite[Theorem 5.4]{HM} or the converse of  \cite[Theorem 6.1]{SS} true. More precisely, whether one of the following statements is true: (i) $d_{\beta}(1)$ is $p$-automatic if and only if $\beta$ is algebraic; and (ii) $d_{\beta}(1)$ is $p$-automatic if and only if $\beta$ is isolated. 

Question 2 \cite{HM}: For any algebraic $\beta$ of positive degree, is it true that $d_{\beta}(x)$ is $p$-automatic if and only if $x$ is algebraic. 

Question 3 \cite[Remark 6.7]{SS}: Is the converse of \cite[Theorem 6.6]{SS} true. That is, for any $\beta$ of positive degree, if $d_{\beta}(x)$ is $p$-automatic if and only if $x$ is algebraic, then is $\beta$ a Pisot or Salem series.

We answer all the three questions by proving that 

\begin{thm}\label{beta expansions}
    Let $\beta\in\F_q((1/z))$ be a formal Laurent series of positive degree. Then the following statements are equivalent: 
    
    ${\rm (i)}$ $\beta$ is algebraic;

    ${\rm (ii)}$ $d_{\beta}(1)$ is $p$-automatic; 
    
    ${\rm (iii)}$ for any $x\in \F_q[[1/z]]$, $d_{\beta}(x)$ is $p$-automatic if and only if $x$ is algebraic.
\end{thm}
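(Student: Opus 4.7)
The plan is to prove the cyclic chain $(iii) \Rightarrow (ii) \Rightarrow (i) \Rightarrow (iii)$. The first implication is trivial: take $x = 1$, which is algebraic. The second is exactly \cite[Theorem 5.4]{HM}. All the substance therefore lies in $(i) \Rightarrow (iii)$, which will be a generalization of Theorem \ref{add-cls case of B} to a setting where the natural digit set of the $\beta$-expansion sits \emph{outside} the valuation ring.

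For the setup I take $K := \F_q(z,\beta)$ as the global function field and $v$ the restriction of the infinity-valuation on $\F_q(z)$, so that $K_v = \F_q((1/z))$, $\pi := 1/\beta$, and $v(\pi) = d := \deg \beta$. I first reduce to the case $x \in M_v$ via the shift $x \mapsto T(x) = \beta x - a_1$: this sends any $x \in A_v$ into $M_v$, it preserves algebraicity because $\beta$ and $a_1 \in \F_q[z]$ are algebraic, and it preserves $p$-automaticity of $d_\beta(\cdot)$ because $d_\beta(T(x))$ is the one-step shift of $d_\beta(x)$, which is a finite modification. Under the assumption $x \in M_v$, every digit $a_n$ lies in $\tilde\Gamma := \{\sum_{i=0}^{d-1} a_i z^i : a_i \in \F_q\}$, an $\F_p$-subspace of $K$ of $\F_p$-dimension $dq_p$, where I write $q = p^{q_p}$. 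Note that $\tilde\Gamma \not\subset A_v$, so Theorem \ref{add-cls case of B} is not applicable as a black box; I adapt its proof.

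The crux is to show that, for any $\F_p$-basis $\zeta_1,\dots,\zeta_{q_p}$ of $\F_q$, the family $\{\zeta_j z^i : 0 \le i \le d-1,\ 1 \le j \le q_p\}$ is simultaneously an $\F_p$-basis of $\tilde\Gamma$, an $\F_p((\pi))$-basis of $K_v$, and an $\F_p(\pi)^a$-basis of $K^a$. The first is immediate. The second I will obtain from the tower $\F_p((\pi)) \subset \F_q((1/\beta)) \subset \F_q((1/z))$ of degrees $q_p$ (the unramified extension with $\F_p$-basis $\{\zeta_j\}$) and $d$ (the totally ramified extension with $\F_q((1/\beta))$-basis $\{1,z,\dots,z^{d-1}\}$, noting that $\F_q((1/\beta))(z) = \F_q((1/z))$ because a finite extension of a complete field is complete and $\F_q(z)$ is dense in $\F_q((1/z))$). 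The third then follows from Theorem \ref{ext deg of alg cls} applied to $K/\F_p(\pi)$: it gives $[K^a : \F_p(\pi)^a] = [K_v : \F_p((\pi))] = dq_p$, matching the count, and the $\F_p((\pi))$-linear independence of the $\zeta_j z^i$ in $K^a$ upgrades to $\F_p(\pi)^a$-linear independence since $\F_p(\pi)^a \subset \F_p((\pi))$.

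Once this basis is established, the remaining argument parallels Theorem \ref{add-cls case of B}. I expand each digit $a_n = \sum_{i,j} a_{n,i,j} \zeta_j z^i$ with $a_{n,i,j} \in \F_p$ and collect terms to obtain
\[
x = \sum_{n \ge 1} a_n \pi^n = \sum_{i,j} \zeta_j z^i \cdot \lambda_{i,j}, \qquad \lambda_{i,j} := \sum_{n \ge 1} a_{n,i,j} \pi^n \in \F_p((\pi)).
\]
Uniqueness of the basis decomposition then yields: $x \in K^a$ iff every $\lambda_{i,j} \in \F_p(\pi)^a$; by the classical Christol theorem inside $\F_p((\pi))$, this is equivalent to each sequence $\{a_{n,i,j}\}_n$ being $p$-automatic; and this is equivalent to $\{a_n\}_n$ being $p$-automatic. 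The main obstacle will be the basis property of $\{\zeta_j z^i\}$ over $\F_p((\pi))$: because $\tilde\Gamma \not\subset A_v$, one cannot invoke the reduction-modulo-$\pi$ argument used in the proof of Theorem \ref{add-cls case of B}, and the explicit tower calculation sketched above is what replaces it.
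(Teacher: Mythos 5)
Your proof is correct, but at the one substantive implication, (i)$\Rightarrow$(iii), it takes a genuinely different route from the paper. (The cyclic structure and the other two implications coincide: the paper also gets (iii)$\Rightarrow$(ii) by taking $x=1$ and (ii)$\Rightarrow$(i) from \cite[Theorem 5.4]{HM}.) The paper never works with the polynomial digit set $\tilde\Gamma\subset\F_q[z]$ at all: it keeps the base field $K=\F_q(1/z)$, takes $\pi=1/\beta$ (merely \emph{algebraic} over $K$, which is all Theorem~\ref{add-cls case of B} requires) and the additively closed set $\Gamma=\F_q+\F_q/z+\cdots+\F_q/z^{d-1}\subset A_v$, and rescales: multiplying the $\beta$-expansion by $\beta/z^{d}$ turns it into the $(\Gamma,\pi)$-expansion of $\beta x/z^{d}$, whose algebraicity is equivalent to that of $x$; then Theorem~\ref{add-cls case of B} is invoked as a black box. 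You instead enlarge the base field to $K=\F_q(z,\beta)$ so that $\pi\in K$, keep the digits where they are (outside $A_v$), reduce to $x\in M_v$ by the shift $T$, and re-run the internal machinery of Theorem~\ref{add-cls case of B} (the basis $\{\zeta_j z^i\}$, Theorem~\ref{ext deg of alg cls} applied to $K/\F_p(\pi)$, componentwise classical Christol). The paper's rescaling buys brevity; your route buys two genuine improvements in rigor: first, it sidesteps a small glitch in the paper, since as written $a_n/z^d=c_{d-1}/z+\cdots+c_0/z^d$ does \emph{not} lie in the paper's $\Gamma$ (one should divide by $z^{d-1}$ instead, or re-index $\Gamma$); second, your reduction $x\mapsto T(x)\in M_v$ cleanly handles the boundary case $\deg x=0$ (e.g.\ $x=1$), where the first digit has degree exactly $d$ and falls outside both digit sets — a case the paper passes over silently when citing \cite[Lemme 2.8]{HM} and \cite[Theorem 3.3]{SS}. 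Two points to tighten in your write-up: the equality $[\F_q((1/z)):\F_q((1/\beta))]=d$ is exactly the fundamental identity $[K_v:\F_q((\pi))]=ef$ that the paper quotes from \cite[Theorem 4.14]{B} — your density/completeness argument shows the extension is generated by $z$, but not by itself that the degree is at most $d$, while linear independence of $1,z,\dots,z^{d-1}$ should be justified by their valuations being pairwise distinct modulo $d$; and you should remark that algebraicity over $K=\F_q(z,\beta)$ agrees with algebraicity over $\F_q(z)$ (the notion appearing in (iii)) because $[K:\F_q(z)]<\infty$ under hypothesis (i).
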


\begin{proof}
Since $\F_q(1/z)$ is a subfield of $\F_q((1/z))$, $\deg(y)$ is well defined for any $y\in\F_q(1/z)$. It is easy to check that $-\deg:\F_q(1/z)\rightarrow \mathbb{Z}$ is a valuation on $\F_q(1/z)$. Set $K=\F_q(1/z)$ and $v=-\deg$. Then $K_v=\F_q((1/z))$ and $A_v=\F_q[[1/z]]$. Take $\pi=1/\beta\in \F_q[[1/z]]$, $\Gamma=\{c_0+c_1/z+\cdots c_{d-1}/z^{d-1}: c_i\in\F_q\}$. Then $\Gamma$ is an additively closed complete set of representatives of $A_v/\pi A_v$. 

Write $d=\deg \beta$. Let $\{a_n\}_{n\ge 1}$ be the $\beta$-expansion of $x\in \F_q[[1/z]]$. Then by \cite[Lemme 2.8]{HM} or \cite[Theorem 3.3]{SS}, $\deg{a_n}<d$. So $a_n/z^d\in\Gamma$ for all $n\ge 1$. Hence  
$$\frac{\beta x}{z^d}=\sum_{n=1}^{\infty} \frac{a_n/z^d}{\beta^{n-1}}
=\sum_{n\ge 1}\frac{a_n}{z^d}\cdot \pi^{n-1}$$
is the expansion of $\beta x/z^d$ with respect to $(\Gamma,\pi)$. Then $\beta$ is algebraic $\Rightarrow$ $\{a_n\}_{n\ge }$ is $p$-automatic if and only if $\{a_n/z^d\}_{n\ge 1}$ is $p$-automatic, if and only if $\beta x/z^d$ is algebraic by Theorem \ref{add-cls case of B} $\Rightarrow$ $d_{\beta}(1)$ is $p$-automatic $\Rightarrow$ $\beta$ is algebraic by \cite[Theorem 5.4]{HM}. That is (i)$\Rightarrow$ (iii) $\Rightarrow$ (ii) $\Rightarrow$ (i). This ends the proof of Theorem 4.1. 
\end{proof}

{\bf Acknowledgement:} This paper is supported by National Natural Science Foundation of China (No.12271382). The author would like to thank Lian Duan, Yining Hu and Shaoshi Chen for many helpful discussions.

\bibliographystyle{amsalpha}

\end{document}